    \let\subsubsection\subparagraph
    \title  {Gauge theory and Rasmussen's invariant}
    \author {P. B. Kronheimer and T. S. Mrowka%
            \thanks{%
            The work of the first author
            was supported by the National Science Foundation through
            NSF grant DMS-0904589. The work of
            the second author was supported by NSF grant DMS-0805841.}} 
    \address {Harvard University, Cambridge MA 02138 \\
              Massachusetts Institute of Technology, Cambridge MA 02139}       
\begin{document}

\maketitle

\section{Introduction}

For a knot $K \subset S^{3}$, the (smooth) \emph{slice-genus}
$g_{*}(K)$ is the smallest genus of any properly embedded,
smooth, oriented surface $\Sigma\subset B^{4}$ with boundary $K$.  In
\cite{Rasmussen}, Rasmussen used a construction based on Khovanov
homology to define a knot-invariant $s(K)\in 2\Z$ with the following
properties:
\begin{enumerate}
\item $s$ defines a homomorphism from the knot concordance group to $2\Z$;
\item $s(K)$ provides a lower bound for the slice-genus a knot $K$, in that
        \[
                  2 g_{*}(K) \ge s(K);
         \]
\item the above inequality is sharp for positive knots (i.e. knots
    admitting a projection with only positive crossings).
\end{enumerate}
These results from \cite{Rasmussen} were notable because they
provided, for the first time, a purely combinatorial proof of an
important fact of 4-dimensional topology, namely that the slice-genus
of a knot arising as the link of a singularity in a complex plane
curve is equal to the genus of its Milnor fiber.  This result, the
local Thom conjecture, had been first proved using gauge theory, as an
extension of Donaldson's techniques adapted specifically to the
embedded surface problem \cite{GTES-I, GTES-II}.

The purpose of this paper is to exhibit a very close connection
between the invariant $s$ and the constructions from gauge theory that
had been used earlier. Specifically, we shall use gauge theory to
construct an invariant $\ssharp(K)$ for knots $K$ in $S^{3}$ 
in a manner that can be made to look structurally very similar to a
definition of Rasmussen's $s(K)$.  We shall then prove that these invariants are
in fact equal:
\begin{equation}
    \label{eq:1}
    \ssharp(K) = s(K),
\end{equation}
for all knots $K$. The construction of $\ssharp$ is based on the
instanton homology group $\Isharp(K)$ as defined in
\cite{KM-unknot}. The equality of $\ssharp$ and $s$ is derived from
the relationship between instanton homology and Khovanov homology
established there. A knot invariant very similar to $\ssharp$ appeared
slightly earlier in \cite{KM-singular}, and its construction derives
from an approach to the minimal-genus problem developed in
\cite{Obstruction}.

Because it is defined using gauge theory, the invariant $\ssharp$ has
some properties that are not otherwise manifest for $s$. We shall
deduce:

\begin{corollary}\label{cor:neg-def}
    Let $K$ be a knot in $S^{3}$, let $X^{4}$ be an oriented
    compact 4-manifold with boundary $S^{3}$, and let $\Sigma \subset
    X^{4}$ be a properly embedded, connected oriented surface with
    boundary $K$. Suppose that $X^{4}$ is negative-definite and has
    $b_{1}(X^{4})=0$. Suppose also that
    the inclusion of $\Sigma$ is homotopic relative to $K$ to a map to the
    boundary $S^{3}$. Then
      \[
                 2 g(\Sigma) \ge s(K).
       \]
     In particular, this inequality holds for any such surface $\Sigma$
     in a homotopy 4-ball.
\end{corollary}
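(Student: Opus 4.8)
The plan is to use the identification $\ssharp(K)=s(K)$ from \eqref{eq:1} and to prove instead the gauge-theoretic inequality $2g(\Sigma)\ge\ssharp(K)$. Since $\ssharp$ is built from $\Isharp$, it behaves well under cobordism maps, and — unlike the combinatorial argument available for $s$ — that good behaviour persists for surfaces sitting inside negative-definite $4$-manifolds, thanks to Donaldson's diagonalization phenomenon. So the whole argument runs: reduce the statement about $(X^{4},\Sigma)$ to a statement about a cobordism map of $\Isharp$, bound the filtration shift of that map by the genus, and then use negative-definiteness to discard the contribution of the ambient $4$-manifold.

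First I would puncture. Choose an interior point $p\in\Sigma$, remove a small standard ball $B$ centred at $p$ and meeting $\Sigma$ in a flat disk, and set $W=X^{4}\setminus\mathrm{int}\,B$ and $S=\Sigma\cap W$. Then $W$ is a cobordism from $(S^{3},U)$ to $(S^{3},K)$, with $U\subset\partial B$ an unknot, and $S$ is a connected oriented cobordism of genus $g=g(\Sigma)$ between $U$ and $K$. The manifold $W$ is again negative-definite with $b_{1}(W)=0$, and the homotopy hypothesis descends: $S\hookrightarrow W$ is homotopic rel $\partial S$ to a map into $\partial W$. The point I want to extract from this is that $S$ carries the zero class in $H_{2}(W,\partial W;\Z/2)$ (as $H_{2}(\partial W;\Z/2)=0$), so that the $SO(3)$-bundle over $W$ entering the cobordism map for $\Isharp$ is the pullback of the standard one; this is exactly where the homotopy hypothesis is used, and it ensures that $(W,S)$ induces a well-defined homomorphism $\psi\colon\Isharp(U)\to\Isharp(K)$ of the same flavour as the cobordism maps used to set up $\ssharp$.

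Next I would analyze the filtration behaviour of $\psi$. Including $S$ as the orbifold locus contributes an energy shift governed by $\chi(S)=-2g$, so — just as for the slice-genus bound in $B^{4}$ — the map $\psi$ moves the filtration level underlying the definition of $\ssharp$ by at most $2g$, up to a correction depending only on the bare $4$-manifold $W$. The crucial step is then to show that this bare correction is $\le 0$: since $W$ is negative-definite with $b_{1}(W)=0$, the instanton cobordism map attached to $W$ alone agrees with the product cobordism map modulo strictly lower filtration — the Floer-theoretic shadow of Donaldson's diagonalization theorem, and precisely the reason ``negative-definite'' is hypothesized. Putting these estimates together gives $\ssharp(K)\le\ssharp(U)+2g$, and since $U$ is the identity of the concordance group we have $\ssharp(U)=s(U)=0$ by \eqref{eq:1} and property~(1), so $2g(\Sigma)\ge\ssharp(K)=s(K)$. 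The homotopy $4$-ball case is the special case $b_{2}(X^{4})=0$, in which the hypotheses hold automatically.

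The main obstacle will be the negative-definiteness step: making precise that the non-product piece $W$ contributes nothing in the unwanted direction requires a dimension/energy count for instantons on $W$ together with a no-reducibles, no-bubbling argument in the spirit of Donaldson, adapted to the orbifold setting in which $\Isharp$ is defined. A secondary but genuine technical point is the first step — verifying that the homotopy of $\Sigma$ rel $K$ really does pass to a homotopy of $S$ rel $\partial S$ into $\partial W$, so that the relevant bundle over $W$ is truly trivial and $\psi$ is independent of the auxiliary metric and perturbations.
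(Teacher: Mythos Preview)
Your outline has the right starting point---puncture to get a cobordism $(W,S)$ from $(S^{3},U)$ to $(S^{3},K)$, then show $2g(\Sigma)\ge\ssharp(K)$ and invoke $\ssharp=s$---but it underuses the homotopy hypothesis and leaves the decisive step unargued. You extract from the hypothesis only that $[S]=0$ in $H_{2}(W,\partial W;\Z/2)$, and then propose to bound a ``bare correction'' from $W$ by a direct energy/dimension count on $(W,S)$. But you give no mechanism for separating the contribution of the ambient $W$ from that of $S$ while $S$ still runs through the interior; there is no filtration on $\Isharp$ in play at this stage of the argument, and your acknowledged ``main obstacle'' is precisely the content of the proof.

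The paper's route sidesteps this by using the homotopy hypothesis far more strongly. One first extends $\ssharp$ to an invariant $\ssharp(W,\Sigma)$ of pairs, checks that it is unchanged by homotopy of $\Sigma$ rel boundary and by stabilizing $\Sigma$ with a standard torus, and then (after possibly adding such handles to $\Sigma$) \emph{actually} homotopes $\Sigma$ to an embedded surface lying in a neighbourhood of $\partial W$ together with an arc joining the two boundary spheres. At that point $W$ decomposes as $([0,1]\times S^{3})\,\#\,N$ with $\Sigma$ entirely in the cylinder summand and $N$ negative-definite with $b_{1}=0$. A connected-sum theorem---neck-stretching and counting perturbed-flat connections on $N$, as in Donaldson---then gives $\psi(W,\Sigma)=|H_{1}(N;\Z)|\cdot\psi([0,1]\times S^{3},\Sigma)$, hence $\ssharp(W,\Sigma)=\ssharp(K)$ on the nose; the genus bound drops out of the definition. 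In short: rather than estimating how much $W$ might shift anything, the paper arranges that $\Sigma$ never sees the nontrivial part of $W$ at all.
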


The smooth 4-dimensional Poincar\'e conjecture is equivalent to the
statement that a smooth homotopy 4-ball with boundary $S^{3}$ must be a
standard ball. A possible approach to showing that this conjecture is
false is to seek a knot $K$ in $S^{3}$ that bounds a smooth disk in
some homotopy ball but not in the standard ball. To carry this
through, one needs some way of showing that a suitably-chosen 
$K$ does not bound a disk
in the standard ball, by an argument that does not apply equally to
homotopy balls. This approach was advanced in \cite{FGMW}, with the
idea of using the non-vanishing of $s(K)$ for this key step. The
corollary above shows that Rasmussen's invariant $s$ cannot be used for this
purpose: a knot $K$  cannot bound a smooth disk
in any homotopy-ball if $s(K)$ is non-zero.

\subparagraph{Acknowledgements.}  The authors would like to thank the
Simons Center for Geometry and Physics and the organizers of the
workshop \emph{Homological Invariants in Low-Dimensional Topology} for
their hospitality during the preparation of these results.

\section{The Rasmussen invariant}

\subsection{Conventions for Khovanov homology}
\label{subsec:Khovanov}

Let $\lambda$ be a formal variable, and let $\Ring$ denote the ring
$\Q[[\lambda]]$ of formal power series. Let $V$ be the free rank-2
module over $\Ring$
with a basis $\vp$ and $\vm$, and
let $\mprod$ and $\Delta$ be defined by,
\[
\begin{aligned}
    \mprod : \vp \otimes \vp &\mapsto \vp \\
    \mprod :  \vp \otimes \vm &\mapsto \vm \\
    \mprod:  \vm \otimes \vp &\mapsto \vm \\
    \mprod:  \vm \otimes \vm &\mapsto \lambda^{2} \vp ,
\end{aligned}
\]
\[
\begin{aligned}
    \Delta : \vp  &\mapsto \vp\otimes \vm + \vm\otimes \vp \\
   \Delta : \vm  &\mapsto \vm\otimes \vm + \lambda^{2} \vp\otimes \vp .
\end{aligned}
\]
These definitions coincide with those of $\mathcal{F}_{3}$ from
\cite{Khovanov-2}, except that the variable $t$ from that paper is now
$\lambda^{2}$ and we have passed to the completion by taking formal
power series.

In order to match the conventions which arise naturally from gauge
theory, we will adopt for the duration of this paper a non-standard
orientation convention for Khovanov homology, so that what we call
$\kh(K)$ here will coincide with the Khovanov homology of the mirror
image of $K$ as usually defined. To be specific, let $K$ be an
oriented link with a plane diagram $D$, having $n_{+}$ positive
crossings and $n_{-}$ negative crossings. Write $N=n_{+} + n_{-}$. For
each vertex $v$ of the cube $\{0,1\}^{N}$, let $K_{v}$ be the
corresponding smoothing, with the standard convention about the
``$0$'' and ``$1$'' smoothings, as illustrated in
Figure~\ref{fig:smoothings}. 
\begin{figure}
    \centering
    \includegraphics[height=1.4 in]{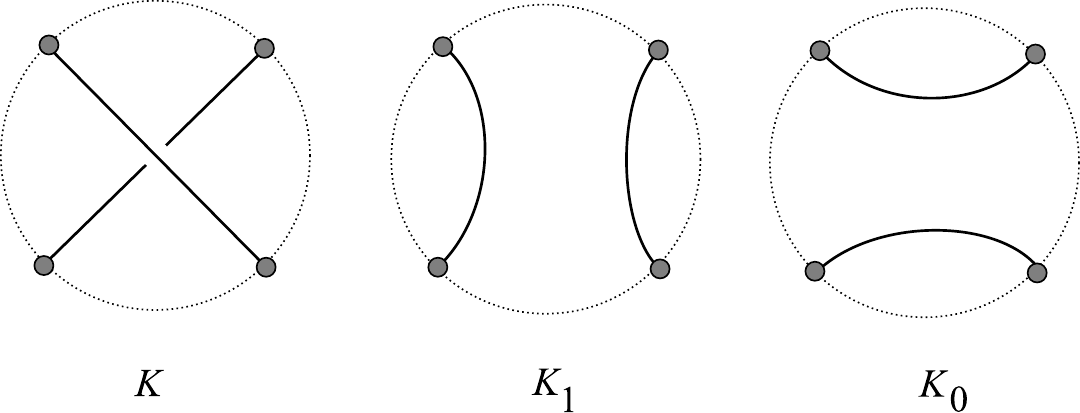}
    \caption{The $0$- and $1$- smoothings at a crossing of $K$.}
    \label{fig:smoothings}
\end{figure}
 For each vertex $v$ of the cube let
$C_{v} = C(K_{v})$ be the tensor product $V^{\otimes p(v)}$, where the
factors are indexed by the components of the unlink $K_{v}$ and the
tensor product is over $\Ring$.  Suppose that $v > u$ and that $(v,u)$
is an edge of the cube: so the coordinates of $v$ and $u$ differ in
exactly one spot at which $v$ is $1$ and $u$ is $0$. Define $d_{vu} :
C_{v} \to C_{u}$ to be the map
\[
        d_{vu} =
        \begin{cases}
            \epsilon_{vu} \mprod, & p(v) = p(u) + 1 ,\\
            \epsilon_{vu} \Delta, & p(v) = p(u) - 1 ,
        \end{cases}
\]
where $\epsilon_{vu}$ is $\pm 1$ and is defined by the conventions of
\cite{Khovanov-1}. Set
\[
\begin{aligned}
    \bC(D) &= \bigoplus_{v} C_{v} \\
    d &= \bigoplus_{v > u} d_{vu}
\end{aligned}
\]
where the second sum is over the edges of the cube. The essential
difference here from the standard conventions is that our differential
$d$ runs from ``$1$'' to ``$0$'' along each edge of the cube, rather
than the other way.

We define a quantum grading and a cohomological grading on $\bC(D)$ as
follows. We assign $\vp$ and $\vm$ quantum gradings $+1$ and $-1$
respectively, we assign $\lambda$ a quantum grading $-2$, 
and for each vertex $v$ we use these to define a grading $Q$ on the tensor product
$C_{v} = V^{\otimes p(v)}$ as usual. We then define the quantum
grading $q$ on $C_{v}$ by
\[
              q = Q - |v| - n_{+} + 2n_{-}
\]
where $|v|$ denotes the sum of the coordinates. We define the
cohomological grading on $C_{v}$ by
\[
              h = -|v| + n_{-}.
\]
With these conventions, $d:\bC(D) \to \bC(D)$ has bidegree $(1,0)$
with respect to $(h,q)$. We define
\[
                   \kh(K) = H_{*}(\bC(D) , d)
\]
with the resulting bigrading. If one were to replace $\lambda^{2}$ by
$1$ in the above definitions of $m$ and $\Delta$, one would recover
the Lee theory \cite{Lee}.

We will also make use of a mod $4$ grading on the complex
$\bC(D)$. Define $q_{0}$ in the same manner as $q$ above, but
assigning grading $0$ to the formal variable $\lambda$. Then define
\[
            \gr = q_{0} - h.
\]
With respect to this grading, $d$ has degree $-1$ and the homology of
the unknot is a free module of rank $2$ of the form $\Ring \up \oplus
\Ring \um$, with the two summands appearing in gradings $1$ and $-1$ respectively.

\subsection{Equivariant Khovanov homology and the $s$-invariant}

The $s$-invariant of a knot $K$ is defined in \cite{Rasmussen} using
the Lee variant of Khovanov homology. An alternative viewpoint on the
same construction is given in \cite{Khovanov-2}. We give here a
definition of $s(K)$ modelled on that second approach.

Let $K$ be a knot. As a finitely generated module over the ring $\Ring$, the homology
$\kh(K)$ is the direct sum of a free module and a torsion
module. Write $\kh'(K)$ for the quotient
        \[\kh'(K) = \kh(K)/\mathrm{torsion}.\]
The first key fact, due essentially to Lee \cite{Lee}, is that
$\kh'(K)$ has rank $2$. More specifically, it is supported in
$h$-grading $0$ and has a single summand in the mod-4 gradings $\gr=1$
and $\gr=-1$. We write $\zp$ and $\zm$ for corresponding generators in
these mod-4 gradings. 

Choose now
an oriented cobordism $\Sigma$ from the unknot $U$ to
$K$. After breaking up $\Sigma$ into a sequence of elementary
cobordisms, one arrives at a map of $\Ring$-modules
\[
        \psi(\Sigma) : \kh(U) \to \kh(K)
\]
and hence also a map of torsion-free $\Ring$-modules
\[
        \psi'(\Sigma) : \kh'(U) \to \kh'(K).
\]
This map has degree $0$ or $2$ with respect to the mod $4$ grading,
according as the genus of $\Sigma$ is even or odd respectively.
There is therefore a unique pair of non-negative integers $m_{+}$, $m_{-}$ such
that either
\[
\begin{aligned}
    \psi'(\Sigma)(\up) &\sim \lambda^{m_{+}} \zp \\
    \psi'(\Sigma)(\um) &\sim \lambda^{m_{-}} \zm \\
\end{aligned}
\]
or
\[
\begin{aligned}
    \psi'(\Sigma)(\up) &\sim \lambda^{m_{+}} \zm \\
    \psi'(\Sigma)(\um) &\sim \lambda^{m_{-}} \zp ,
\end{aligned}
\]
according to the parity of the genus $g(\Sigma)$. Here $\sim$ means
that these elements are associates in the ring. Set
\[
            m(\Sigma) = \frac{1}{2} ( m_{+} + m_{-}).
\]
Rasmussen's $s$-invariant can then be defined by
\[
        s(K) = 2 ( g(\Sigma) - m(\Sigma)).
\]
Although our orientation convention for $\kh(K)$ differs from the
usual one, the above definition of $s$ brings us back into line
with the standard signs, so that $s(K) \ge 0$ for positive knots (see
\cite{Rasmussen}). 

We have phrased the definition of $s(K)$ so that there is no up-front
reference to the $q$-grading. If one grants that $s(K)$ is
well-defined (i.e.~that it is independent of the choice of
$\Sigma$), then it is clear from the definition that $s(K)$ is a lower
bound for $2g_{*}(K)$. 

\section{The gauge theory version}

\subsection{Local systems and the definition of $\ssharp$}

Given an oriented link $K$ in $\R^{3}$, let us form the link
$K^{\sharp}$ in $S^{3}$ by adding an extra Hopf link near the point at
infinity. Let $w$ be an arc joining the two components of the Hopf
link. In \cite{KM-unknot}, we defined
\[
     \Isharp(K) = I^{w}(S^{3}, K^{\sharp}).
\]
This is a Floer homology group defined using certain orbifold $\SO(3)$ connections on
$S^{3}$ (regarded as an orbifold with cone-angle $\pi$ along $K^{\sharp}$). The arc $w$ is a
representative for $w_{2}$ of these connections. As defined in
\cite{KM-unknot} the ring of coefficients is $\Z$, but we now wish to
define the group using a non-trivial local system on the space
$\bonf$ of orbifold connections on $S^{3}$, as in \cite{KM-singular}.

We review the construction of the local system. 
Let $\Sring$ denote the ring 
\[
\begin{aligned}
    \Sring&=\Q[u^{-1}, u] \\ &= \Q[\Z].
\end{aligned}
\]
We can regard $\Sring$ as contained in the ring $\Q[\R]$ and use the
notation $u^{x}$ ($x\in\R$) for generators of the larger ring. Given a
continuous function
\[
           \mu : \bonf \to \R/\Z,
\]
we define a local system of free rank-1 $\Sring$-modules by defining
\[\Gamma_{a} = u^{\mu(a)} \Sring\] for $a\in\bonf$. The particular
$\mu$ we wish to use follows the scheme of equation (89) of
\cite{KM-singular}. That is, we frame the link $K$ with the Seifert
framing and for each orbifold connection $A$ we define
\[
        \mu([A]) \in U(1) = \R/\Z
\]
by taking the product of the holonomies of $[A]$ along the components
of $K$. The framing is used to resolve the ambiguity in this
definition in the orbifold context: see \cite{KM-singular}. We write
\[
      \Isharp(K; \Gamma)
\]
for the corresponding Floer homology group.
 Note that
we do not make use of the extra two components belonging the Hopf link in
$K^{\sharp}$ in the definition of $\Gamma$. 
(The definition in this paper is essentially
identical to that in \cite{KM-singular}, except that we were
previously not adding the Hopf link to $K$: instead, we considered
$I^{w}(T^{3} \# (S^{3}, K);\Gamma)$, where $w$ was a standard circle
in the $T^{3}$. The variable $u$ was previously called $t$.)

The homology group $\Isharp(K;\Gamma)$ is naturally $\Z/4$ graded, and
with one choice of standard conventions the homology of the unknot $U$
is given by
\[
          \Isharp(U;\Gamma) = \Sring \up \oplus \Sring \um
\]
where $\up$ and $\um$ have mod-4 gradings $1$ and $-1$ respectively.

The following points are proved in \cite{KM-singular}. First, 
an oriented cobordism $\Sigma \subset [0,1]\times S^{3}$ between links induces a map
$\psi^{\sharp}(\Sigma)$ of their instanton homology groups. In the case of a
cobordism between knots, the map $\psi^{\sharp}(\Sigma)$ as degree $0$ or $2$
mod $4$, according as the genus of $\Sigma$ is even or odd. 
Using a blow-up
construction, this definition can be extended to ``immersed
cobordisms'' $\Sigma \subset [0,1] \times S^{3}$ with normal
crossings. We can then consider how the map $\psi^{\sharp}(\Sigma)$
corresponding to an immersed cobordism $\Sigma$ changes
when we change $\Sigma$ by one of the local moves indicated
schematically in Figure~\ref{fig:moves}. (The figure represents
analogous moves for an immersed arc in $\R^{2}$.)
\begin{figure}
    \centering
    \includegraphics[height=2.0 in]{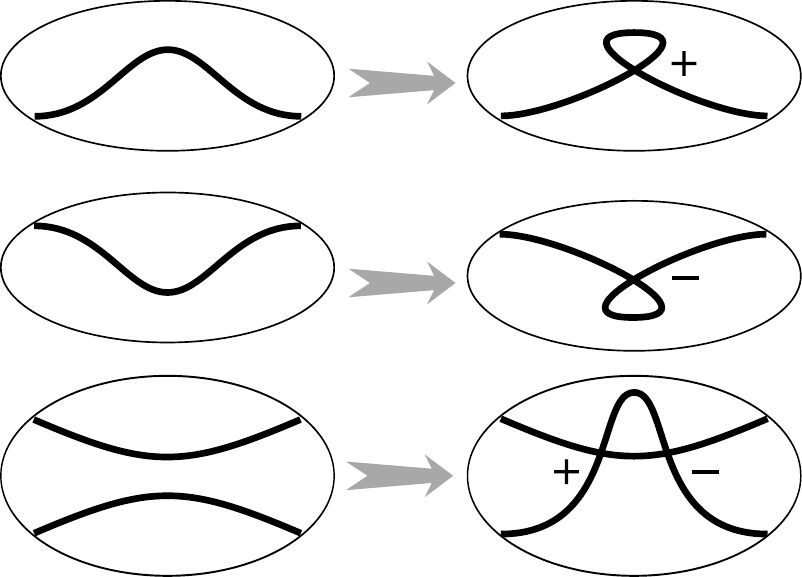}
    \caption{Schematic pictures in lower dimension, representing 
    the positive and negative twist moves and the finger move.}
    \label{fig:moves}
\end{figure}

\begin{proposition}[\protect{\cite[Proposition 5.2]{KM-singular}}]
    Let $\Sigma^{*}$ be obtained from $\Sigma$ by either a positive
    twist move (introducing a positive double-point), a negative twist
    move (introducing a negative double-point), or a finger move
    (introducing a cancelling pair of double-points). Then we have,
    respectively:
    \begin{enumerate}
    \item $\psi^{\sharp}(\Sigma^*) = (1-u^{2})\psi^{\sharp}(\Sigma)$ for the positive twist move;
   \item $\psi^{\sharp}(\Sigma^*) = \psi^{\sharp}(\Sigma)$ for the negative twist move;
   \item $\psi^{\sharp}(\Sigma^*) = (1-u^{2})\psi^{\sharp}(\Sigma)$ for the finger move.
    \end{enumerate} \qed
\end{proposition}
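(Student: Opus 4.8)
The plan is to reduce each of the three statements to a single local computation, and then to evaluate that computation by resolving double points by means of a blow-up.

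First I would observe that each of the three moves is supported inside a small ball $B^{4}\subset[0,1]\times S^{3}$ which meets $\Sigma$ in one or two standard flat disks. Thus $\Sigma^{*}$ differs from $\Sigma$ only by forming the internal connected sum with a standard closed surface $T$ in $S^{4}$: a $2$-sphere immersed with a single double point of the relevant sign, for each of the two twist moves, and a $2$-sphere immersed with a cancelling pair of double points, for the finger move. Functoriality of $\psi^{\sharp}$ under composition of cobordisms, together with its behaviour under internal connected sum, then gives $\psi^{\sharp}(\Sigma^{*}) = \Psi(T)\circ\psi^{\sharp}(\Sigma)$, where $\Psi(T)$ is the endomorphism of $\Isharp(U;\Gamma)$ induced by the corresponding \emph{local} cobordism from the unknot $U$ to itself. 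So the proposition is reduced to showing that $\Psi(T)$ is multiplication by $1-u^{2}$ for a positive double point, by $1$ for a negative one, and by $1-u^{2}$ for the cancelling pair. To see at the outset that each $\Psi(T)$ is genuinely multiplication by a single element of $\Sring$, rather than some more general degree-$0$ endomorphism of $\Sring\up\oplus\Sring\um$, I would invoke naturality together with the fact that $\up$ and $\um$ lie in distinct $\Z/4$ gradings, which forbids off-diagonal terms; the two diagonal entries agree by the symmetry interchanging $\up$ and $\um$.

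Next I would dispose of the finger move: locally, and up to ambient isotopy, a finger move can be realized as a positive twist move followed by a negative twist move, so statement (3) will follow from statements (1) and (2) once those are in hand. It then remains to compute $\Psi(T_{\pm})$ for an immersed sphere with one double point of each sign. Since the map $\psi^{\sharp}$ of an immersed cobordism is by construction computed on the blow-up of the ambient $4$-manifold at its double points, using the proper transform of the surface, the point is to compare the instanton invariants of this blown-up configuration with those of the product cobordism. Blowing up at a double point amounts to cutting out a ball and gluing in a punctured copy of the complex projective plane, with its orientation arranged so that the exceptional sphere has square $-1$ exactly when the double point is positive. I would then stretch the neck separating the exceptional region from the rest. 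Because the local system $\Gamma$ is determined entirely by the boundary link $K$, which is untouched by all three moves, the local-system weights match across the neck and the whole discrepancy localizes near the exceptional sphere. For a positive double point the exceptional sphere has square $-1$ and carries a one-parameter family of anti-self-dual configurations; the constraint imposed by the proper transform cuts this family down to a $0$-dimensional moduli space consisting of two points, a trivial one carrying local-system weight $u^{0}=1$ and an irreducible one carrying weight $u^{2}$ — the exponent $2$ arising because two sheets of the original surface have been brought together — and these enter with opposite signs, producing the factor $1-u^{2}$. For a negative double point the exceptional sphere would instead have positive square and support no anti-self-dual configurations at all, so only the trivial term survives and $\Psi(T_{-})$ is the identity.

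The hard part will be the analysis behind the previous paragraph: carrying out the neck-stretching and gluing argument near the exceptional sphere, in the orbifold/singular-bundle setting and in the presence of the local system, with enough precision to identify the relevant moduli spaces and their orientations before and after the blow-up, and then to check that the surviving $0$-dimensional moduli space evaluates to exactly $1-u^{2}$ rather than to a unit multiple of it. A subsidiary point needing care is the claim that, at the level of $\psi^{\sharp}$, a finger move really is the concatenation of a positive and a negative twist move, together with the verification that the local cobordism maps $\Psi(T)$ act as scalars in the first place.
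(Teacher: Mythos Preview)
The paper does not prove this proposition; it is quoted from \cite{KM-singular} without argument. Your overall plan --- localize the move, interpret $\psi^{\sharp}(\Sigma^{*})$ via the blow-up that defines it, and compare with $\psi^{\sharp}(\Sigma)$ by stretching the neck of the connected sum --- is the correct strategy and is what is done in \cite{KM-singular}.

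The substantive gap is in your treatment of the negative twist. The blow-up used to define $\psi^{\sharp}$ for an immersed cobordism is always the connected sum with $\overline{\mathbb{CP}^{2}}$, irrespective of the sign of the double point; one does not switch to $\mathbb{CP}^{2}$ when the double point is negative. What distinguishes the two signs is the homology class of the proper transform relative to the exceptional class $e$: at a positive double point both local sheets can be taken to be complex lines, and the proper transform picks up class $-2e$; at a negative double point one sheet carries the reversed orientation, the two intersection points with $E$ have opposite sign, and the proper transform lies in class $0\cdot e$. It is this difference --- class $-2e$ versus class $0$ --- that produces the factors $1-u^{2}$ versus $1$ in the blow-up formula. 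Your proposed mechanism (an exceptional sphere of square $+1$ supporting no anti-self-dual solutions) would in any case not yield the identity: connect-summing with $\mathbb{CP}^{2}$ introduces $b_{2}^{+}>0$, the trivial flat connection then has a nonzero obstruction space, and the cobordism map does not simply pass through unchanged.

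Two smaller points. The reduction of the finger move to a positive twist followed by a negative twist is not literally correct as an isotopy of immersed surfaces (the finger move involves two distinct sheets, the twists only one), though the conclusion holds because the blow-up formula applies independently at each double point and depends only on its sign. And the claim that the local map is a scalar by ``the symmetry interchanging $\up$ and $\um$'' needs justification: no such grading-reversing involution of $\Isharp(U;\Gamma)$ is evident, and in practice one verifies the two diagonal entries separately.
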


From these properties one can deduce the following. 
For any
knot $K$, the
torsion submodule of the $\Sring$-module $\Isharp(K;\Gamma)$ is
annihilated by some power of $(1-u^{2})$.  Second, just as for the
unknot, the quotient
$
         \Isharp(K;\Gamma)/\mathrm{torsion}
$
is a free module of rank $2$, with generators in mod-4 gradings $1$
and $-1$
\[
         \Isharp(K;\Gamma)/\mathrm{torsion} = \Sring \zp \oplus \Sring \zm.
\]
So if $\Sigma$ is
a cobordism from the unknot $U$ to $K$, then the induced map on
$\Isharp/\mathrm{torsion}$ is determined by a pair of elements
$\sigma_{+}(\Sigma)$, $\sigma_{-}(\Sigma)$ in $\Sring$, in that we have either
\begin{subequations}\label{eq:sigma-pm}
\begin{equation}\label{eq:sigma-pm-1}
\begin{aligned}
    \psi^{\sharp}(\Sigma)(\up) &= \sigma_{+}(\Sigma) \zp \\
    \psi^{\sharp}(\Sigma)(\um) &= \sigma_{-}(\Sigma) \zm 
\end{aligned}
\end{equation}
or
\begin{equation}\label{eq:sigma-pm-2}
\begin{aligned}
    \psi^{\sharp}(\Sigma)(\up) &= \sigma_{+}(\Sigma) \zm \\
    \psi^{\sharp}(\Sigma)(\um) &= \sigma_{-}(\Sigma) \zp 
\end{aligned}
\end{equation}
\end{subequations}
modulo torsion elements, according as the genus of $\Sigma$ is even or
odd. Furthermore, these elements depend only on the genus of
$\Sigma$. The dependence is as follows. If $\Sigma_{1}$ denotes a
surface of genus $1$ larger, then
\begin{equation}\label{eq:p-and-q}
\begin{aligned}
    \sigma_{+}(\Sigma_{1})& = q(u) \sigma_{-}(\Sigma) \\
  \sigma_{-}(\Sigma_{1})& = p(u) \sigma_{+}(\Sigma) 
\end{aligned}
\end{equation}
where $p(u)$ and $q(u)$ are two elements of $\Sring$, independent of
$K$ and $\Sigma$. They are determined by the map $\psi^{\sharp}$ arising from
a genus-1 cobordism from $U$ to $U$. See equation (105) in
\cite{KM-singular}. From that paper we record the fact that
\begin{equation}\label{eq:pq}
            p(u) q(u) = 4 u^{-1} (u-1)^{2}.
\end{equation}
Later in this paper we will compute $p$ and $q$, but for now the above
relation is all that we require.

Let us now pass to the completion of the local ring of $\Sring$ at
$u=1$: the ring of formal power series in the variable $u-1$. In this
ring of formal power series, let $\lambda$ be the solution to the
equation
\[
       \lambda^{2} = u^{-1}(u-1)^{2}
\]
given by
\[
         \lambda = (u-1) - \frac{1}{2}(u-1)^{2} + \cdots,
\]
so that the completed local ring can be identified with the ring
$\Ring = \Q[[\lambda]]$. We can then write
\begin{equation}\label{eq:msharp-def}
\begin{aligned}
    \sigma_{+}(\Sigma) & \sim \lambda^{\msharp_{+}} \\
    \sigma_{-}(\Sigma) & \sim \lambda^{\msharp_{-}}
\end{aligned}
\end{equation}
in $\Ring$, for unique natural numbers $\msharp_{+}$ and $\msharp_{-}$ depending
on $\Sigma$. We define
\[
         \msharp(\Sigma) = \frac{1}{2}(\msharp_{+} + \msharp_{-}) .
\]
From the relation \eqref{eq:pq}, it follows that $\msharp(\Sigma_{1})
= \msharp(\Sigma)+1$. Therefore, if we define
\[
      \ssharp = 2( g(\Sigma) - \msharp(\Sigma)),
\]
then $\ssharp$ is independent of $\Sigma$ and is an invariant of $K$
alone. This is our definition of the instanton variant
$\ssharp(K)$. The factor of $2$ is included only to match the
definition of $s(K)$ from \cite{Rasmussen}.
From the definition, it is clear that
\[
         \ssharp(K) \le 2 g_{*}(K)
\]
for any knot $K$, just as with Rasmussen's invariant.

If we write $\hat\Gamma$ for the local system $\Gamma\otimes_{\Sring}
\Ring$, then we can phrase the whole definition in terms of the
$\Ring$-module $\Isharp(K;\hat\Gamma)$. Let us abbreviate our notation
and write
\[
         I(K) = \Isharp(K;\hat\Gamma)
\]
and
\[
             I'(K) = I(K) / \mathrm{torsion}.
\]
Then the integers $\msharp_{+}$ and $\msharp_{-}$ corresponding to a
cobordism $\Sigma$ from $U$ to $K$ completely describe the map
$I'(U)\to I'(K)$ arising from $\Sigma$, just as in the Khovanov
case.

\subsection{Mirror images}

Our invariant changes sign when we replace $K$ by its mirror:

\begin{lemma}
    If $K^{\dag}$ is the mirror image of $K$, then $\ssharp(K^{\dag})
    = -\ssharp(K)$.
\end{lemma}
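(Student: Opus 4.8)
The plan is to derive the mirror formula from the general behavior of instanton homology under orientation reversal, mimicking the standard duality argument in Floer theory. First I would recall that $K^{\dag}$ is obtained from $K$ by reflecting $S^{3}$, equivalently by viewing the same orbifold from the opposite orientation. At the level of the configuration space this reverses the sign of the Chern–Simons functional, so the Floer chain complex of $\Isharp(K^{\dag};\hat\Gamma)$ is (up to the usual completeness/finiteness bookkeeping) the dual complex of $\Isharp(K;\hat\Gamma)$, with the $\Z/4$ grading negated and the local system $\hat\Gamma$ replaced by its dual. Since $\hat\Gamma$ is built from holonomy via $\mu$, reflection sends $u \mapsto u^{-1}$; under the identification of the completed local ring with $\Ring=\Q[[\lambda]]$ this is the involution fixing $\lambda^{2}=u^{-1}(u-1)^{2}$ and sending $\lambda\mapsto -\lambda$ (up to a unit). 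Thus $I'(K^{\dag})$ is the $\Ring$-dual of $I'(K)$, with gradings $\pm 1$ interchanged.

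Next I would track a cobordism. If $\Sigma$ is an oriented cobordism from $U$ to $K$ of genus $g$, then reflecting gives an oriented cobordism $\Sigma^{\dag}$ from $U$ to $K^{\dag}$ of the same genus $g$, and the induced map $\psi^{\sharp}(\Sigma^{\dag})\colon I'(U)\to I'(K^{\dag})$ is identified with the transpose of $\psi^{\sharp}(\Sigma)\colon I'(U)\to I'(K)$ under the duality pairings, again with the $u\mapsto u^{-1}$ substitution. Reading off \eqref{eq:msharp-def}, the elements $\sigma_{\pm}(\Sigma^{\dag})\in\Ring$ are obtained from $\sigma_{\pm}(\Sigma)$ by applying the ring involution $\lambda\mapsto -\lambda$; since that involution preserves $\lambda$-adic valuation and associate classes, the exponents are unchanged: $\msharp_{+}(\Sigma^{\dag})=\msharp_{+}(\Sigma)$ and likewise for $\msharp_{-}$, possibly after swapping the two labels (which does not affect the average). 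Hence $\msharp(\Sigma^{\dag})=\msharp(\Sigma)$ and $g(\Sigma^{\dag})=g(\Sigma)$, so
\[
\ssharp(K^{\dag})=2\bigl(g(\Sigma^{\dag})-\msharp(\Sigma^{\dag})\bigr)=2\bigl(g(\Sigma)-\msharp(\Sigma)\bigr)=\ssharp(K).
\]

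The above naive computation gives the wrong sign, which signals where the real content lies: the duality statement does not simply identify $\sigma_{\pm}(\Sigma^{\dag})$ with the image of $\sigma_{\pm}(\Sigma)$, but rather pairs the map out of $I'(U)$ for $\Sigma^{\dag}$ against the map out of $I'(U)$ for $\Sigma$ through a perfect pairing $I'(K)\otimes I'(K^{\dag})\to\Ring$. So the correct route is: compose a cobordism $\Sigma$ from $U$ to $K$ with a reflected cobordism $\bar\Sigma^{\dag}$ from $K$ back to $U$ (giving the identity on the nose, or a closed-off genus contribution one can pin down by the unknot calculation $\Isharp(U;\hat\Gamma)=\Ring\up\oplus\Ring\um$), and extract from the resulting relation between $\sigma_{\pm}(\Sigma)$ and $\sigma_{\pm}(\Sigma^{\dag})$ the identity $\msharp_{+}(\Sigma)+\msharp_{+}(\Sigma^{\dag})= \text{(a fixed constant depending only on the capping genus)}$, and similarly for the $-$ labels. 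Comparing with the genus shift $\msharp(\Sigma_{1})=\msharp(\Sigma)+1$ and the sharpness of $\ssharp$ for the unknot ($\ssharp(U)=0$) then forces $\msharp(\Sigma^{\dag})=g(\Sigma^{\dag})+g(\Sigma)-\msharp(\Sigma)$ for cobordisms of matched genus, whence $\ssharp(K^{\dag})=-\ssharp(K)$. The main obstacle is making the duality/pairing statement for $\Isharp(\,\cdot\,;\hat\Gamma)$ precise with the local-system coefficients — in particular checking that the natural pairing is perfect over $\Ring$ modulo torsion and that it intertwines the $u\mapsto u^{-1}$ involution with cobordism maps in the expected way; once that is in hand the grading and valuation bookkeeping is routine.
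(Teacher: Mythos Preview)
Your first pass is, as you note, wrong, and your second pass has the right strategic idea --- compose cobordisms to obtain a self-cobordism of $U$, use $\ssharp(U)=0$, and extract an additive identity --- which is exactly the paper's route. But the execution has a genuine gap: the composition you actually write down, ``$\Sigma$ from $U$ to $K$ followed by $\bar\Sigma^{\dag}$ from $K$ back to $U$'', never passes through $K^{\dag}$ at all, so it cannot relate $\ssharp(K^{\dag})$ to anything. The paper's maneuver differs in a small but decisive way. One regards the \emph{same} surface $\Sigma$ (same orientation) as a cobordism $\Sigma^{\dag}$ from $K^{\dag}$ to $U$, so that under the identification $I'(K^{\dag})=\Hom_{\Ring}(I'(K),\Ring)$ the map $\psi^{\sharp}(\Sigma^{\dag}):I'(K^{\dag})\to I'(U)$ is the transpose of $\psi^{\sharp}(\Sigma):I'(U)\to I'(K)$; with the grading flip this reads $\zp\mapsto\sigma_{-}(\Sigma)\up$, $\zm\mapsto\sigma_{+}(\Sigma)\um$ in the even-genus case. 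One then chooses a \emph{separate} cobordism $T$ from $U$ to $K^{\dag}$ and composes $\Sigma^{\dag}\circ T:U\to U$. The composite map is diagonal with entries $\sigma_{-}(\Sigma)\sigma_{+}(T)$ and $\sigma_{+}(\Sigma)\sigma_{-}(T)$, so $\msharp(\Sigma^{\dag}\circ T)=\msharp(\Sigma)+\msharp(T)$; since also $g(\Sigma^{\dag}\circ T)=g(\Sigma)+g(T)$ and $\ssharp(U)=0$, one obtains $0=\tfrac12\ssharp(K)+\tfrac12\ssharp(K^{\dag})$. The point you are missing is that duality turns $\Sigma$ into a cobordism \emph{out of} $K^{\dag}$, not into it, and one needs an independent $T$ going into $K^{\dag}$ to close up.

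Your worry about the $u\mapsto u^{-1}$ involution on the coefficient ring is legitimate but ultimately harmless: after completion to $\Ring=\Q[[\lambda]]$ this involution carries $\lambda$ to a unit times $-\lambda$, hence preserves $\lambda$-adic valuations and does not affect $\msharp$. The paper simply asserts the duality $I'(K^{\dag})\cong\Hom(I'(K),\Ring)$ without dwelling on this, and you would be right to note that a careful treatment of the local system under dualization is being taken for granted there; but it is not where the sign comes from.
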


\begin{proof}
    The complex $C$ that computes $I(K^{\dag})$ can be identified with
    $\Hom(C,\Ring)$, so that $I(K^{\dag})$ is related to $I(K)$ as
    cohomology is to homology. The universal coefficient theorem
    allows us to identify
\[ 
        I'(K^{\dag}) = \Hom(I'(K) , \Ring).
\]
   Under this identification, the canonical mod-4 grading changes sign.
   Furthermore, if $\Sigma$ is a cobordism from $U$ to $K$, then we
   can regard it (with the same orientation) also as a cobordism $\Sigma^{\dag}$ from
   $K^{\dag}$ to $U$ and the resulting map $\psi^{\sharp}{\Sigma^{\dag}} :
   I'(K^{\dag}) \to I'(U)$ is the
   dual of the map $\psi^{\sharp}(\Sigma): I'(U) \to I'(K)$. So if $\zp$ and
   $\zm$ are standard generators for $I'(K^{\dag})$ in mod-4 gradings
   $1$ and $-1$, we have
   \[
   \begin{aligned}
       \psi^{\sharp}(\Sigma^{\dag}) : \zp &\mapsto \sigma_{-}(\Sigma) \up\\
       \psi^{\sharp}(\Sigma^{\dag}) : \zm &\mapsto \sigma_{+}(\Sigma) \um
   \end{aligned}
    \]
   if the genus is even or
   \[
   \begin{aligned}
       \psi^{\sharp}(\Sigma^{\dag}) : \zp &\mapsto \sigma_{-}(\Sigma) \um\\
       \psi^{\sharp}(\Sigma^{\dag}) : \zm &\mapsto \sigma_{+}(\Sigma) \up
   \end{aligned}
    \]
    if the genus is odd. Here $\sigma_{\pm}(\Sigma)$ are the elements
    of $\Sring$ associated earlier to the cobordism $\Sigma$ from $U$
    to $K$. 

   Now take any oriented cobordism $T$ from $U$ to $K^{\dag}$. For
   convenience arrange that $\Sigma$ and $T$ both have even genus. Consider the
   composite cobordism from $U$ to $U$. It is given by
   \[
   \begin{aligned}
       \up &\mapsto \sigma_{-}(\Sigma) \sigma_{+}(T) \up\\
       \um  &\mapsto \sigma_{+}(\Sigma) \sigma_{-}(T) \um.
   \end{aligned}
   \]
   For the composite cobordism, we therefore have
   \[
             \msharp(\Sigma \circ T) = \msharp(\Sigma) + \msharp(T),
   \]
   so
    \[
    \begin{aligned}
        0 &= \frac{1}{2} \ssharp(U) \\
          &= g(\Sigma\circ T) - \msharp(\Sigma \circ T) \\
          &= g(\Sigma) - \msharp(\Sigma) + g(T) - \msharp(T) \\
          &= \frac{1}{2} \ssharp(K^{\dag}) - \frac{1}{2} \ssharp(K).
    \end{aligned}
     \]
\end{proof}

\section{Equality of the two invariants}

To show that $\ssharp(K)=s(K)$ for all knots $K$, it will suffice to
prove the inequality
\begin{equation}\label{eq:the-inequality}
         \ssharp(K) \ge s(K)
\end{equation}
because both invariants change sign when $K$ is replaced by its mirror
image. 

\subsection{The cube and its filtration}

Let $D$ be a planar diagram for an oriented link $K$, and let the
resolutions $K_{v}$ and the Khovanov cube $\bC(D)$ be defined as in
section~\ref{subsec:Khovanov}. According to \cite{KM-unknot}, there is
a spectral sequence whose $E_{1}$ term is isomorphic to the Khovanov
complex corresponding to this diagram and which abuts to the instanton
homology $\Isharp$. The spectral sequence was defined in \cite{KM-unknot} using
$\Z$ coefficients, and we need to adapt it now to the case of the
local coefficient system $\Gamma$ or $\hat\Gamma$.

We continue to write $I(K)$ as an abbreviation for
$\Isharp(K;\hat\Gamma)$ and we write $\CI(K)$ for the complex that
computes it. The complex depends on choices of metrics and
perturbations. We form a cube $\bC^{\sharp}(D)$ from the instanton
chains complexes of all the resolutions:
\[
       \bC^{\sharp}(D) =  \bigoplus_{v} \CI(K_{v}).
\]
Although we previously used $\Z$ coefficients, we can carry over
verbatim from
\cite{KM-unknot} the definition of a differential $d^{\sharp}$ on
$\bC^{\sharp}(D)$ with components
\[
           d^{\sharp}_{vu} : \CI(K_{v}) \to \CI(K_{u})
\]
for all vertices $v\ge u$ on the cube. The homology of the cube
$(\bC^{\sharp}(D), d^{\sharp})$ is isomorphic to the instanton homology
$I(K)$: the proof is easily adapted from \cite{KM-unknot}.

As in \cite{KM-unknot}, the differential $d^{\sharp}$ respects the
decreasing filtration defined by the grading $h$ on cube, so we may
consider the associated spectral sequence.  We have the following:

\begin{proposition}
    The  page $(E_{1}, d_{1})$ of the spectral sequence associated to
    the filtered complex $(\bC^{\sharp}(D) , d^{\sharp})$ is
    isomorphic to the Khovanov cube $(\bC(D), d)$ in the version
    defined in section~\ref{subsec:Khovanov}.
\end{proposition}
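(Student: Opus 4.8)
The plan is to identify the $E_1$-page of the instanton spectral sequence with the Khovanov complex vertex-by-vertex and edge-by-edge, checking that both the groups and the differentials match under the conventions of section~\ref{subsec:Khovanov}. First I would observe that the $E_1$-page is, by construction of the spectral sequence for a filtered complex, the direct sum over vertices $v$ of the homology of $(\CI(K_v), d^\sharp_{vv})$, where $d^\sharp_{vv}$ is the ``internal'' part of $d^\sharp$ that preserves $h$. Since $K_v$ is an unlink with $p(v)$ components, I would invoke the calculation from \cite{KM-unknot} (adapted to the local system $\hat\Gamma$, exactly as the unknot calculation was adapted earlier in this excerpt) that $H_*(\CI(K_v), d^\sharp_{vv}) = I(K_v) = V^{\otimes p(v)}$, where $V = \Ring\up \oplus \Ring\um$ is the rank-2 module appearing in the definition of $\kh$. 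This matches $C_v = C(K_v)$ on the nose. I would also need to check that the quantum and cohomological gradings are the intended ones; the $h$-grading is the filtration grading and the $q$-grading comes from the natural grading on $I(K_v)$ together with the shift by $-|v|-n_++2n_-$, which is a matter of matching conventions with \cite{KM-unknot}.

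Next I would turn to the differential $d_1$. For an edge $(v,u)$ with $v > u$ differing in one coordinate, $d_1$ is induced on homology by the edge-component $d^\sharp_{vu}: \CI(K_v) \to \CI(K_u)$, which is the cobordism map $\psi^\sharp(S_{vu})$ for the elementary cobordism $S_{vu}$ (a single saddle) between the resolutions. So I need to identify the map $I(V^{\otimes p(v)}) \to I(V^{\otimes p(u)})$ induced by a single saddle with either $\mprod$ or $\Delta$ depending on whether the saddle merges or splits a circle. This is exactly the content of the functoriality of $\Isharp$ under elementary cobordisms together with the computation of the merge and split maps for the unknot and two-component unlink. The merge and split maps are determined by the genus-0 cobordism calculations and the structure of $I(U)$ and $I(U \sqcup U) = V \otimes V$; I would cite \cite{KM-singular} (in particular the formulas underlying \eqref{eq:sigma-pm} and \eqref{eq:p-and-q}) to pin down that the merge map sends $\vm \otimes \vm \mapsto \lambda^2 \vp$ and the split map sends $\vm \mapsto \vm \otimes \vm + \lambda^2 \vp \otimes \vp$, so that they agree with $\mprod$ and $\Delta$. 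The signs $\epsilon_{vu}$ are handled as in \cite{KM-unknot}: the same sign-assignment conventions on the cube are used in defining $d^\sharp$, so the induced signs on $d_1$ agree with the Khovanov signs.

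The main obstacle I expect is keeping track of the direction of the differential and the precise grading conventions. The excerpt has emphasized that the non-standard orientation convention makes $\kh(K)$ the Khovanov homology of the mirror, and that the differential $d$ runs from ``$1$'' to ``$0$'' along each edge — which is the same direction as $d^\sharp$ in \cite{KM-unknot}. So the bookkeeping is really about confirming that the edge maps $d^\sharp_{vu}$ for $v > u$ induce precisely $\mprod$ (when $p(v) = p(u)+1$) and $\Delta$ (when $p(v) = p(u)-1$) with the correct $\lambda^2$-coefficients coming from the local system $\hat\Gamma$, and that the quantum grading $q = Q - |v| - n_+ + 2n_-$ is exactly the one carried by the spectral sequence. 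Once the merge/split maps and the gradings are matched, the isomorphism $(E_1, d_1) \cong (\bC(D), d)$ follows, since a chain complex built from a cube is determined by its vertex groups, its edge maps, and the commuting-square signs.
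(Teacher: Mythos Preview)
Your outline is essentially the paper's own argument: identify $E_1$ with $\bigoplus_v V^{\otimes p(v)}$ via excision and the rank-$2$ computation of $I(U)$, then match the edge maps with $\mprod$ and $\Delta$ by analyzing the pair-of-pants cobordisms. The one place where you move too fast is the determination of the $\lambda^2$ coefficient: the references you cite, \eqref{eq:sigma-pm}--\eqref{eq:p-and-q} together with \eqref{eq:pq}, only give the \emph{product} $p(u)q(u)=4\lambda^2$ for the composite genus-$1$ self-cobordism of $U$, not the individual pants and copants maps. The paper first fixes generators $\bv_\pm$ of $I(U)$ using the disk cobordisms $D_\pm$, uses simple compositions with $D_\pm$ (and Poincar\'e duality) to pin the ``easy'' coefficients to $1$, and then composes $\pants$ with $\copants$ to read off $q(u)=2$, $p(u)=2c$, from which \eqref{eq:pq} yields $c=\lambda^2$; this last step is actually computed here rather than cited.
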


\begin{proof}
    The definition of the filtered complex tells us that the $E_{1}$
    page is the sum of the instanton homologies of the unlinks
    $K_{v}$:
\[
               E_{1} = \bigoplus_{v\in\{0,1\}^{N}} I(K_{v}).
\]
    If $K_{v}$ is an unlink of $p(v)$ components, then the excision
    argument of \cite{KM-unknot} provides a natural isomorphism
    \[
           \gamma:  I(K_{v})   \to I(U)^{\otimes p(v)}
     \]
     where $U$ is the unknot. To compute $I(U)$, we return to the
     definitions: the critical points of the unperturbed Chern-Simons
     functional comprise an $S^{2}$, so $I(U)$ is a free
     $\Ring$-module of rank $2$, with generators differing by $2$ in
     the $\Z/4$ grading. Our conventions put these generators in
     degrees $1$ and $-1$. In order to nail down particular
     generators, we use the disk cobordisms $D_{+}$ from the empty
     link $U_{0}$ to the unknot $U=U_{1}$, and $D_{0}$ from $U_{1}$ to
     $U_{0}$. We define $\bv_{+} \in I(U_{1})$ to be the image of the
     canonical generator $1 \in \Ring \cong I(U_{0})$ under the map
     $\psi^{\sharp}(D_{+})$. 
     We define
     $\bv_{-}\in I(U_{1})$ to be the unique element with
     $\psi^{\sharp}(D_{-})(\bv_{-})=1$.

     At this point we have an isomorphism of $\Ring$-modules,
     \[
              E_{1} = \bigoplus_{v\in\{0,1\}^{N}} V^{\otimes p(v)},
     \]
     and so $E_{1} = \bC(D)$. The differential $d_{1}$ on the $E_{1}$
     page arises from the maps
     \[
                    \psi^{\sharp}_{vu} : I(K_{v}) \to I(K_{u})
     \]
      obtained from the elementary cobordisms $K_{v} \to K_{u}$
      corresponding to the edges of the cube. As in \cite{KM-unknot},
      and adopting the notation from there,
      we  need only examine the pair-of-pants cobordisms 
      \[
      \begin{aligned}
          \copants:  U_{2} &\to U_{1} \\
          \pants: U_{1} &\to U_{2}
      \end{aligned}
       \]
      and show that the corresponding maps
     \[
      \begin{aligned}
         \psi^{\sharp}( \copants) :& V\otimes V \to V \\
        \psi^{\sharp}( \pants) : &V \to V\otimes V
      \end{aligned}
     \]
     are equal respectively to the maps $\mprod$ and $\Delta$ from the
     definition of $\kh(K)$ in section~\ref{subsec:Khovanov}.

     Because of the mod $4$ grading, we know that we can write
     \[
     \begin{aligned}
         \psi^{\sharp}(\pants)(\bv_{+}) &= a (\bv_{+}\otimes \bv_{-}) + b
         (\bv_{+} \otimes
         \bv_{-}) \\
         \psi^{\sharp}(\pants)(\bv_{-}) &= c (\bv_{+}\otimes \bv_{+}) + d
         (\bv_{-} \otimes
         \bv_{-}) \\
     \end{aligned}
     \]
     for some coefficients $a$, $b$, $c$, $d$ in $\Ring$. Simple
     topological arguments, composing with the cobordisms $D_{+}$ and
     $D_{-}$, show that $a=b=d=1$. From Poincar\'e duality, then see
     that
     \[
     \begin{aligned}
         \psi^{\sharp}(\copants)(\bv_{+}\otimes \bv_{+}) &= \bv_{+} \\
         \psi^{\sharp}(\copants)(\bv_{+}\otimes \bv_{-}) &= \bv_{-} \\
         \psi^{\sharp}(\copants)(\bv_{-}\otimes \bv_{+}) &= \bv_{-} \\
         \psi^{\sharp}(\copants)(\bv_{-}\otimes \bv_{-}) &= c \bv_{+}.
     \end{aligned}     \]
      To compute the unknown element $c\in\Ring$ we compose $\pants$
      and $\copants$ to form a genus-1 corbodism from $U_{1}$ to
      $U_{1}$. In the basis $\bv_{+}$, $\bv_{-}$, the composite
      cobordism gives rise to the map
      \begin{equation}\label{eq:phi}
         \phi =  \begin{pmatrix}
                   0 & 2 c \\ 2 & 0
                 \end{pmatrix}.
      \end{equation}
   Comparing this with \eqref{eq:p-and-q}, we see that $q(u) = 2$ and
   $p(u)=2c$. We also have the relation \eqref{eq:pq}: in terms of the
   variable $\lambda$, this says $p(u) q(u) = 4\lambda^{2}$. We deduce
   that $c=\lambda^{2}$. Thus $\psi^{\sharp}(\copants)$ and $\psi^{\sharp}(\pants)$ are
   precisely the maps $\mprod$ and $\Delta$ from the definition of
   $\kh(K)$. This completes the proof.
\end{proof}

There is a further simplification we can make,
following~\cite[section~2]{KM-q-grading}. 
The critical point set
for the unperturbed Chern-Simons functional for the each unlink
$K_{v}$ is a product spheres, $(S^{2})^{p(v)}$. We can therefore
choose  a perturbation so that the complex $\CI(K_{v})$ has generators
whose indices are all equal mod $2$. The differential on $\CI(K_{v})$
is the zero, and the page $E_{1}$ is canonically isomorphic to
$\bC^{\sharp}(D)$ (i.e. to the $E_{0}$ page). 
At this point we can reinterpret the above
proposition in slightly different language:

\begin{proposition}\label{prop:cube-leading}
    With perturbations chosen as above, there is a canonical isomorphism of cubes
    $\bC^{\sharp}(D) \to \bC(D)$. Under this isomorphism, the
    differential $d^{\sharp}$ can be written as $d + x$, where $d$ is
    the Khovanov differential on $\bC(D)$ (which increases the $h$
    grading by $1$) and $x$ is a sum of terms all of which increase
    the $h$ grading by more than $1$. \qed
\end{proposition}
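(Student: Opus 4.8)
The plan is to pass to a ``reduced'' model of the cube in which each $\CI(K_{v})$ carries the zero differential, so that $\bC^{\sharp}(D)$ is literally identified with $\bC(D)$ as a graded module, and then to read off the filtration behaviour of $d^{\sharp}$ component by component, invoking the previous proposition only to pin down the leading term.

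First I would choose the perturbations as in \cite[section~2]{KM-q-grading}: since the critical set of the unperturbed Chern--Simons functional for the unlink $K_{v}$ is the product of spheres $(S^{2})^{p(v)}$, a Morse--Bott perturbation can be made so that all generators of $\CI(K_{v})$ lie in indices of one fixed parity mod $2$; then the self-component $d^{\sharp}_{vv}$, which is the internal differential of $\CI(K_{v})$, vanishes. Consequently $\CI(K_{v})=I(K_{v})$, and the excision isomorphism $I(K_{v})\cong I(U)^{\otimes p(v)}$ of \cite{KM-unknot} together with the identification $I(U)\cong V$ given by the canonical generators $\bv_{+},\bv_{-}$ (pinned down by the disk cobordisms $D_{+}$ and $D_{-}$) assembles, over all vertices $v$, to a canonical isomorphism of cubes $\bC^{\sharp}(D)\to\bC(D)$ respecting the vertex grading. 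This is precisely the identification used in the previous proposition, now applied at the chain level rather than on homology, which is legitimate exactly because the internal differentials are zero.

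With this identification fixed, I would decompose $d^{\sharp}=\sum_{v\ge u}d^{\sharp}_{vu}$ according to the value of $|v|-|u|$. Since $d^{\sharp}_{vu}$ is nonzero only for $v\ge u$ on the cube, every component satisfies $|v|\ge|u|$ and hence raises $h=-|v|+n_{-}$ by exactly $|v|-|u|\ge 0$; in particular $d^{\sharp}$ never lowers $h$. The terms with $|v|-|u|=0$ are the internal differentials, which are zero by the choice of perturbation. The terms with $|v|-|u|=1$ are the maps induced by the elementary cobordisms along the edges of the cube, and by the previous proposition --- concretely, the identification of $\psi^{\sharp}(\copants)$ and $\psi^{\sharp}(\pants)$ with the maps $\mprod$ and $\Delta$ --- their sum is exactly the Khovanov differential $d$. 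Setting $x$ equal to the sum of the remaining components, those with $|v|-|u|\ge 2$, gives $d^{\sharp}=d+x$ with $x$ a sum of terms each raising $h$ by more than one, which is the assertion.

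The main obstacle is not this bookkeeping but ensuring that the reduction is compatible with the earlier proposition: one must know that the perturbations killing the internal differentials can be chosen so that the chain-level edge maps of the reduced cube still represent the cobordism maps $\psi^{\sharp}$ --- i.e.\ that the reduced model is functorial under the pair-of-pants and disk cobordisms --- so that the computation of the leading term on the $E_{1}$ page transfers verbatim. This is exactly what \cite[section~2]{KM-q-grading} provides, and granting it the proof is formal.
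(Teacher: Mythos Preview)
Your argument is correct and is essentially the paper's own: the proposition is stated there with a \qed{} immediately after a short paragraph explaining that, with perturbations from \cite[section~2]{KM-q-grading} chosen so that each $\CI(K_{v})$ has vanishing internal differential, the $E_{0}$ page is canonically the $E_{1}$ page, and the previous proposition then identifies the $h$-degree-$1$ part of $d^{\sharp}$ with the Khovanov differential $d$. Your write-up just makes this bookkeeping explicit, including the correct observation that the compatibility of the reduced model with the edge cobordism maps is what \cite{KM-q-grading} supplies.
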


\begin{remark}
    The term $x$ has odd degree with respect to
    the mod $2$ grading defined by $h$.
\end{remark}

\subsection{Comparing maps from cobordisms}

Let $\Sigma$ be a cobordism from the unknot $U_{1}$ to a knot $K$. 
This cobordism provides maps
\[
\begin{aligned}
    \psi(\Sigma) : V &\to \kh(K) \\
    \psi^{\sharp}(\Sigma) : V &\to I(K).
\end{aligned}
\]
Given a diagram $D$ for $K$, we can represent both $\kh(K)$ and $I(K)$
as the homologies of a cube $\bC(D)= \bC^{\sharp}(D)$, with two different
differentials, as above.
At the chain level, the maps $\psi(\Sigma)$ and
$\psi^{\sharp}(\Sigma)$ can represented by maps to the cube:
\[
\begin{aligned}
    \Psi(\Sigma) : V &\to \bC(D) \\
    \Psi^{\sharp}(\Sigma) : V &\to \bC(D) .
\end{aligned}
\]
Since Khovanov homology is a graded theory with respect to $h$,
the map $\Psi(\Sigma)$ respects the $h$-grading, so the image of
$\Psi(\Sigma)$ lies in grading $h=0$. This does not hold of
$\Psi^{\sharp}(\Sigma)$. We shall show:

\begin{proposition}\label{prop:Psi-leading}
   For a suitable choice of $\Sigma$,
    the maps $\Psi(\Sigma)$ and $\Psi^{\sharp}(\Sigma)$ can be chosen
    at the chain level so that
     \[
            \Psi^{\sharp}(\Sigma) = a \Psi(\Sigma) + \bX,
     \]
      where $a\in\Ring$ is a unit and $\bX$ strictly increases the $h$-grading. 
\end{proposition}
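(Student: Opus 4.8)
The plan is to build the cobordism $\Sigma$ explicitly from an unknotted, unlinked diagram for $U_1$ to the given diagram $D$ for $K$, realized as a sequence of elementary moves, and then to compare the two chain-level maps step by step along this sequence. More precisely, I would start with a movie presentation of $\Sigma$: a finite sequence of Reidemeister moves, births, deaths, and band (saddle) moves carrying the standard one-crossing-free diagram of $U_1$ to $D$. Each elementary step induces a map on the Khovanov cube and, by the cobordism-map construction of \cite{KM-singular} and \cite{KM-unknot}, a corresponding map on the instanton cube $\bC^\sharp(D')$ for the intermediate diagram $D'$. The key structural input is Proposition~\ref{prop:cube-leading}: for every diagram appearing in the movie, the instanton differential is the Khovanov differential plus terms that strictly raise $h$. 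I would show that the same ``leading-term'' principle propagates through each elementary chain map, so that composing them yields $\Psi^\sharp(\Sigma) = a\Psi(\Sigma) + \bX$ with $\bX$ strictly raising $h$ and $a$ a unit.

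The heart of the argument is a filtered bookkeeping lemma: if $f^\sharp = f + (\text{higher } h)$ and $g^\sharp = g + (\text{higher } h)$ at the chain level, where $f,g$ are the corresponding $h$-preserving Khovanov chain maps, then $g^\sharp \circ f^\sharp = g\circ f + (\text{higher } h)$, and similarly for chain homotopies witnessing invariance under Reidemeister moves. This is formal once each elementary map is known to have the stated form. So the real work is to establish the leading-term form for each type of elementary cobordism map. For births and deaths this is essentially the computation of $\psi^\sharp(D_\pm)$ already used in the proof of Proposition~\ref{prop:cube-leading}, which identifies the generators $\bv_\pm$. For saddle moves the relevant local model is exactly the pair-of-pants calculation carried out above, showing $\psi^\sharp(\copants)=\mprod$ and $\psi^\sharp(\pants)=\Delta$ on the nose --- here the ``higher $h$'' correction comes not from the local model but from the embedding of the local cube into the global cube $\bC^\sharp(D')$, where the ambient differential $d^\sharp = d + x$ forces the chain-level cobordism map to carry correction terms. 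For Reidemeister moves I would invoke the chain-homotopy equivalences of \cite{KM-unknot} between the instanton cubes of diagrams related by a move, noting that these are filtered quasi-isomorphisms whose leading term is the corresponding Khovanov chain homotopy equivalence.

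The phrase ``for a suitable choice of $\Sigma$'' is what lets me avoid the genuinely delicate point: I do not need this for every cobordism, only for one. So I would simply pick $\Sigma$ to be a cobordism of minimal complexity realizing the concordance-type relation needed later --- concretely, attach bands to $D$ to unknot it, turning $D$ into a diagram of $U_1$, and read $\Sigma$ backwards. Because Khovanov's cobordism maps and the instanton cobordism maps are both functorial (up to the appropriate notion) and both compatible with the cube decomposition, once the leading-term propagation is set up the conclusion follows by composing the finitely many elementary pieces. The unit $a$ is tracked through the composition: each birth/death contributes the identity on the relevant $V$-factor in leading order, and each saddle contributes $\mprod$ or $\Delta$ exactly, so no power of $\lambda$ is lost; hence $a$ is a unit in $\Ring$.

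The main obstacle I anticipate is verifying that the instanton cobordism maps for the elementary pieces are genuinely \emph{filtered} at the chain level --- i.e., that one can choose the chain-level representative $\Psi^\sharp(\Sigma)$ so that it never \emph{decreases} $h$ and agrees with the Khovanov map in top filtration degree. The cobordism map in instanton theory is a priori only defined up to chain homotopy and depends on choices of metrics and perturbations; I would need to check, following the excision and neck-stretching arguments of \cite{KM-unknot, KM-singular}, that these choices can be made compatibly with the $h$-filtration on all the intermediate cubes simultaneously. This is the kind of claim that is ``clear from the construction'' but requires one to re-examine where the filtration on $\bC^\sharp(D)$ comes from (the number of $1$-smoothings, governed by how the cobordism's critical values are ordered) and to confirm that cobordism maps, built from moduli spaces on $[0,1]\times S^3$ with the surface inside, respect that ordering. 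Everything else --- the local pair-of-pants and disk computations, the formal propagation of leading terms through composition, the identification of $a$ as a unit --- I expect to be routine given the results already in the excerpt.
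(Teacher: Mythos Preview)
Your overall strategy --- decompose $\Sigma$ into elementary moves, establish that each instanton chain map is filtered with the Khovanov map as leading term, then compose --- is exactly the paper's approach. You also correctly identify the filteredness of $\Psi^\sharp$ as the main external input; the paper draws this from \cite{KM-q-grading}, Proposition~1.5, rather than from \cite{KM-unknot, KM-singular}.

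Where your proposal has a gap is in the treatment of Reidemeister moves. You assert that the instanton chain-homotopy equivalences for Reidemeister moves have leading term equal to the standard Khovanov chain maps, and list this among the steps you ``expect to be routine.'' The paper explicitly flags this verification as ``a little awkward for the Reidemeister~III move in particular,'' and rather than carrying it out, exploits the freedom in ``a suitable choice of $\Sigma$'' much more aggressively than you do. Specifically, the paper builds $\Sigma$ by starting from a crossingless diagram of $U_1$, adding $1$-handles to produce an unlink, applying Reidemeister~I moves only to crossing-free circles (at most one per component), and then adding further $1$-handles. Reidemeister~I$_+$ is expressed as RII composed with the inverse of RI$_-$, so one is reduced to RI$_-$, RII, and $1$-handles. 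Your proposed choice --- attach bands to $D$ to unknot it --- does not achieve this reduction: after banding you still have a diagram with crossings that must be simplified by Reidemeister moves of all types.

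Having reduced to RI$_-$ and RII applied to a single crossingless circle (via the tensor product rule for split diagrams), the paper does \emph{not} directly match the two chain maps. Instead it argues indirectly: both the leading term $\Psi_0$ and the Khovanov map $\Psi$ induce isomorphisms $V \to H(\bC(D_0),d)$ of free rank-$2$ modules preserving the mod-$4$ grading, so they differ by a diagonal automorphism $\delta$ sending $\vp\mapsto a\vp$, $\vm\mapsto b\vm$ with $a,b$ units. To conclude $a=b$ (so that the discrepancy is a single scalar unit), the paper uses that both maps intertwine the genus-$1$ self-map $\phi$ of \eqref{eq:phi}, which is realized by two $1$-handle additions and is therefore already under control. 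This $\phi$-commutation trick is the idea missing from your proposal; your argument that ``each saddle contributes $\mprod$ or $\Delta$ exactly, so no power of $\lambda$ is lost'' addresses only the $1$-handle pieces and says nothing about the Reidemeister pieces.
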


\begin{remark}
    The authors presume that $a$ can be shown to be $\pm 1$, but the
    argument presented below will leave that open.
\end{remark}

\begin{proof}[Proof of the Proposition]
The construction of a chain map $\Psi^{\sharp}(\Sigma)$ at the level
of the cubes $\bC(D)$  is presented in \cite{KM-q-grading}. The
construction depends on decomposing $\Sigma$ as a composite of
elementary cobordisms, each of which corresponds either to a
Reidemeister move or to the addition of handle of index $0$, $1$ or
$2$. The map $\Psi(\Sigma)$ on the Khovanov complex is constructed in
a similar manner. So it is sufficient to consider such elementary
cobordisms. Thus we consider two knots or links $K_{1}$ and $K_{0}$,
with diagrams $D_{1}$ and $D_{0}$, differing either by a Reidemeister
move or by a handle-addition, and a standard oriented cobordism $\Sigma$
between them. (The cobordism is topologically a cylinder in the case
of a Reidemeister move.) There is a chain map
\[
        \Psi^{\sharp}(\Sigma)  : \bC(D_{1}) \to \bC(D_{0})
\]
representing the map $\psi^{\sharp}(\Sigma)$.

  In \cite{KM-q-grading} it is shown how to define
$\Psi^{\sharp}(\Sigma)$ so that it 
respects the decreasing filtration defined by
$h$. (See Proposition~1.5 of \cite{KM-q-grading}. The term $S\cdot S$
is absent in our case because our cobordism is orientable.) So we can
write
\[
        \Psi^{\sharp}(\Sigma) = \Psi_{0} + \bX
\]
where $\Psi_{0}$ respects the $h$-grading, and $\bX$ strictly
increases it. From Proposition~\ref{prop:cube-leading} we see that
  $\Psi_{0}$ is a chain map for the Khovanov differential $d$. 
If we check that $\Psi_{0}$ induces the map
  $a \psi(\Sigma)$ on Khovanov homology, for some unit $a$, then we will be done.
    \begin{figure}
        \centering
        \includegraphics[height=1.0 in]{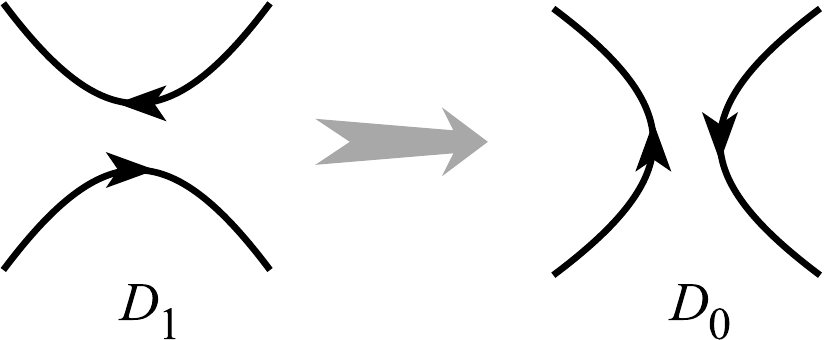}
        \caption{Changing an oriented link diagram by the addition of an oriented 1-handle.}
        \label{fig:1-handle}
    \end{figure}

Since this verification seems a little awkward for the Reidemeister
III move in particular, we simplify the work needed here
e by exploiting the fact that
we are free to choose $\Sigma$ as we wish. We start with a
crossing-less diagram for the unknot $U_{1}$ and then perform
$1$-handle additions to this diagram to obtain an unlink with many
components. Next we perform Reidemeister I moves to components of this
unlink, at most one per component, to introduce some crossings. We
then perform more $1$-handle additions on the diagram. In this way we
can arrive at a cobordism from $U_{1}$ to any knot $K$, using only
$1$-handles together with Reidemeister I moves applied to circles
without crossings in the diagram.

In fact, because we need to compare the leading term $\Psi_{0}$ with
the map $\psi(\Sigma)$ defined in \cite{Rasmussen}, we should treat
only the \emph{negative} Reidemeister I move directly: the positive
version, Reidemeister I$_+$, can be obtained as the composite of a
Reidemeister II and the inverse of a Reidemeister I$_-$.

With this simplification, we see that we need only check two cases:
\begin{enumerate}
\item \label{item:RI} introducing a single negative crossing in a circle without crossings, by
    a Reidemeister I$_-$ move;
\item  \label{item:RII} introducing two crossings in a circle without crossings, by
    a Reidemeister II move;
\item  \label{item:1-handle} altering the diagram by the addition of an oriented $1$-handle,
    as shown in Figure~\ref{fig:1-handle}. 
\end{enumerate}

Case \ref{item:1-handle} is the most straightforward of these
three. If $D_{1}$ and $D_{0}$ are diagrams as shown in the figure,
then we can draw a third diagram $D_{2}$ representing a link with an
extra crossing, so that $D_{1}$ and $D_{0}$ are the two diagrams
obtained by smoothing the one extra crossing in the two standard
ways. The cube $\bC(D_{2})$ can then be regarded as $\bC(D_{1}) \oplus
\bC(D_{0})$, and the map $\Psi^{\sharp}(\Sigma): \bC(D_{1}) \to
\bC(D_{0})$ corresponding to the addition of a 1-handle is equal to
the corresponding component of the differential on $\bC(D_{2})$. (See
\cite[section~10]{KM-q-grading}.) The
same is true for the corresponding map in the Khovanov theory, as
described in \cite{Rasmussen}; so the
fact that the leading term of $\Psi^{\sharp}(\Sigma)$ is the Khovanov
map is just an instance of Proposition~\ref{prop:cube-leading},
applied to the diagram $D_{2}$.

Cases \ref{item:RI} and \ref{item:RII} can be treated together, as follows.
Since the Reidemeister move only involves a single circle without
crossings, we can pass to the simple case in which $D_{1}$ itself is a
single circle. This is because both the instanton theory and the
Khovanov theory have tensor product rule for split diagrams, and it is
easy to check that the two product rules agree to leading order. In
the case of a single circle, we have then two chain maps
\[
\begin{aligned}
    \Psi_{0} : V &\to \bC(D_{0}, d) \\
    \Psi    : V &\to \bC(D_{0}, d) \\
\end{aligned}
\]
where the first map is the leading term of $\Psi^{\sharp}$ and the
second map is the one defined for the Khovanov theory in
\cite{Rasmussen}. The maps $\psi_{0}$ and $\psi$ which these give rise
to on homology are both isomorphisms, from one free module of rank $2$
to another. Since both preserve the same grading mod $4$, it follows
that $\psi_{0}$ and $\psi$ differ at most by an automorphism $\delta$
of $V$ of the form
\[
\begin{aligned}
    \vp &\mapsto a \vp \\
    \vm &\mapsto b \vm ,
\end{aligned}
\]
where $a$ and $b$ are units in $\Ring$. We must only check that
$a=b$. Because we have already dealt with $1$-handle additions, we can
see that $a=b$ as a consequence of the fact that $\delta$ must commute
with the map $\phi$ from \eqref{eq:phi}. This map corresponds a
cobordism from $U_{1}$ to itself genus $1$, and can be realized as the
addition of two $1$-handles. Both $\psi_{0}$ and $\psi$ intertwine the
map $\phi : V \to V$ by a map on the homology of $\bC(D_{2}, d)$
arising from a chain map
\[
         \Phi :\bC(D_{2}, d) \to \bC(D_{2}, d)
\]
given (in both cases) by applying $\phi$ or $\phi\otimes 1$ as
appropriate at each vertex of the cube. In the case of $\psi_{0}$,
this follows from the naturality of instanton homology, while in the
case of $\psi$ it can be verified directly.
\end{proof}

The inequality \eqref{eq:the-inequality} -- and hence
the equality \eqref{eq:1} -- can be deduced from
Proposition~\ref{prop:Psi-leading} as follows. From the definitions of
and $s$ and $\ssharp$, it is apparent that it will suffice to show
\[
\begin{aligned}
    m^{\sharp}_{+} &\le m_{+} \\
    m^{\sharp}_{-} &\le m_{-},
\end{aligned}
\]
where $m^{\sharp}_{+}$ etc.~are the numbers associated to the
cobordism $\Sigma$. Consider the first of these two
inequalities. (The second is not essentially different.) Although the
definition of $m^{\sharp}_{+}$ was in terms of $I'(K) =
I(K)/\mathrm{torsion}$, we can use the fact that all the torsion is
annihilated by some power of $\lambda$ to rephrase the definition:
$m^{\sharp}_{+}$ is the largest integer $m$ such that, for all
sufficiently large $k$, the element
\[
              a^{\sharp}_{k}  := \psi^{\sharp}(\Sigma)(\lambda^{k} \bv_{+})
\]
is divisible by $\lambda^{k+m}$. This means that the
chain representative
\[
        \ba^{\sharp}_{k} = \Psi^{\sharp}(\Sigma)(\lambda^{k} \bv_{+})
\]
is $d^{\sharp}$-homologous to an element of $\bC(D)$ that is divisible
by $\lambda^{k+m}$:
\begin{equation}\label{eq:ak}
      \ba^{\sharp}_{k} = \lambda^{k+m} \bb^{\sharp} +
      d^{\sharp} \bc^{\sharp}
\end{equation}
for $m=m^{\sharp}_{+}$.

Let us write $\cF^{i}$ for the subcomplex of $\bC(D)$ generated by
the elements in $h$-grading $\ge i$. In \eqref{eq:ak}, the element
$\ba^{\sharp}_{k}$ on the left belongs to $\cF^{0}$ and its leading
term in degree $h=0$ is equal to 
\[
   \ba_{k} = \Psi(\Sigma)(\lambda^{k}\bv_{+})
\]
by Proposition~\ref{prop:Psi-leading}. A priori, we do not know which
level of the filtration the elements $\bb^{\sharp}$ and $\bc^{\sharp}$
belong to. But we have the following lemma.

\begin{lemma}
    After perhaps increasing $k$, we can arrange
that
\begin{equation}\label{eq:ak-repeat}
 \ba^{\sharp}_{k} = \lambda^{k+m} \bb^{\sharp} +
      d^{\sharp} \bc^{\sharp} 
\end{equation}
where
$\bb^{\sharp}$ lies in $\cF^{0}$ and $\bc^{\sharp}$ lies in
$\cF^{-1}$. 
\end{lemma}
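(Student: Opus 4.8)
The plan is to exploit the fact, recorded just before Proposition~\ref{prop:cube-leading}, that the torsion in $I(K)$ is annihilated by a power of $\lambda$, so that divisibility statements modulo torsion become honest divisibility statements after multiplying by a suitable fixed power of $\lambda$. Concretely, I would start from the given relation \eqref{eq:ak}, namely $\ba^{\sharp}_{k} = \lambda^{k+m}\bb^{\sharp} + d^{\sharp}\bc^{\sharp}$, in which a priori $\bb^{\sharp}$ and $\bc^{\sharp}$ may have components in arbitrarily negative $h$-gradings. The first step is to observe that the left-hand side $\ba^{\sharp}_{k}$ lies in $\cF^{0}$, and to split off the part of the equation lying in the quotient complex $\bC(D)/\cF^{0}$, i.e. in $h$-grading $<0$. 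In that quotient the equation reads $0 = \lambda^{k+m}\,[\bb^{\sharp}] + d^{\sharp}\,[\bc^{\sharp}]$, so $[\bb^{\sharp}]$ is a class in the homology of the quotient complex that is divisible by $\lambda^{k+m}$ modulo boundaries.

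Next I would bring in the finiteness of torsion: the homology of the truncated complex $\bC(D)/\cF^{0}$ (with differential $d^{\sharp}$) is a finitely generated $\Ring$-module, hence its torsion is killed by $\lambda^{M}$ for some fixed $M$ independent of $k$. Therefore, once $k+m \ge M$, the class $[\bb^{\sharp}]$, being $\lambda^{k+m}$-divisible modulo boundaries, must in fact be a boundary-plus-torsion in a controlled way; more precisely $\lambda^{M}[\bb^{\sharp}]$ already vanishes in that homology unless $[\bb^{\sharp}]$ itself is $d^{\sharp}$-exact in the quotient. The upshot is that, after increasing $k$, we may assume $[\bb^{\sharp}] = d^{\sharp}[\bce]$ in $\bC(D)/\cF^{0}$ for some chain $\bce$ of $h$-grading $<0$; lifting $\bce$ arbitrarily to $\bC(D)$ and replacing $\bb^{\sharp}$ by $\bb^{\sharp} - d^{\sharp}\bce$ and $\bc^{\sharp}$ by $\bc^{\sharp} + \lambda^{k+m}\bce$ makes the new $\bb^{\sharp}$ lie in $\cF^{0}$ without disturbing \eqref{eq:ak-repeat}. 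One then repeats the same argument one filtration level down to force $\bc^{\sharp}$ into $\cF^{-1}$: the components of $\bc^{\sharp}$ in $h$-grading $\le -2$ satisfy, modulo $\cF^{-1}$, the relation $0 = d^{\sharp}\bc^{\sharp} \bmod \cF^{-1}$ since both $\ba^{\sharp}_{k}$ and (now) $\lambda^{k+m}\bb^{\sharp}$ lie in $\cF^{0}\subset\cF^{-1}$, so $\bc^{\sharp}$ is a cycle in the sub-quotient $\cF^{\le -2}$ and, after again increasing $k$ and using finiteness of torsion, can be corrected by a boundary to lie in $\cF^{-1}$.

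The bookkeeping that makes this work is that each correction step only requires increasing $k$ by a bounded amount (governed by the single integer $M$ coming from the finitely generated homology of the relevant truncation/sub-quotient of $(\bC(D),d^{\sharp})$), and there are only finitely many filtration levels below $0$ to clear, so the process terminates. I expect the main obstacle to be the careful formulation of the induction on filtration level: one has to make sure that correcting $\bb^{\sharp}$ and $\bc^{\sharp}$ at one level does not reintroduce low-filtration garbage at another, and that the "increase $k$" operations can be carried out uniformly. Concretely one should probably phrase the argument in terms of the long exact sequences of the pairs $(\cF^{i},\cF^{i+1})$ and the fact that $\lambda$ acts nilpotently on all the torsion submodules appearing, so that a single $k$ works for all the finitely many exact sequences involved. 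The compatibility of the leading term with $\Psi(\Sigma)$ from Proposition~\ref{prop:Psi-leading} is what guarantees that the leading ($h=0$) part of $\ba^{\sharp}_{k}$ is genuinely $\ba_{k}$, but that input is only needed afterwards, not in the present lemma.
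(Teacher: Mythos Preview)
Your approach differs from the paper's in a way that creates a genuine gap. In the quotient $\bC(D)/\cF^{0}$ (with differential $d^{\sharp}$), the equation $0=\lambda^{k+m}[\bb^{\sharp}]+d^{\sharp}[\bc^{\sharp}]$ tells you that the \emph{homology class} of $[\bb^{\sharp}]$ is $\lambda$-torsion, not that it is ``$\lambda^{k+m}$-divisible modulo boundaries'' and not that it is a boundary. Your proposed replacement $\bb^{\sharp}\mapsto\bb^{\sharp}-d^{\sharp}\bce$, $\bc^{\sharp}\mapsto\bc^{\sharp}+\lambda^{k+m}\bce$ requires $[\bb^{\sharp}]$ itself to be $d^{\sharp}$-exact in the quotient, and torsion does not give you that. ``Increasing $k$'' does not help in the way you suggest: multiplying the whole equation by $\lambda^{l}$ leaves $\bb^{\sharp}$ unchanged (it is $\bc^{\sharp}$ and the exponent that scale), so you never force $[\bb^{\sharp}]$ to become exact. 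If instead you replace $\bb^{\sharp}$ by $\lambda^{l}\bb^{\sharp}$ (so that it does become exact in the quotient once $l$ is large), the exponent in the resulting identity is $k+m$, not $(k+l)+m$, and this discrepancy is exactly what you cannot afford in the application to $m_{+}\ge m^{\sharp}_{+}$.

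The paper proceeds differently: it never passes to the quotient complex $(\bC(D)/\cF^{0},d^{\sharp})$. Instead it works with the \emph{associated graded}, using $d^{\sharp}=d+x$ and extracting the leading $h$-component $b_{-n}$ of $\bb^{\sharp}$. Since $d^{\sharp}\bb^{\sharp}=0$, the leading term gives $db_{-n}=0$, so $b_{-n}$ is a Khovanov cycle in negative $h$-grading. The crucial structural input --- that the torsion-free part of $\kh(K)=H_{*}(\bC(D),d)$ is supported in $h$-grading $0$ --- then forces $\lambda^{l}b_{-n}=d\beta$ for some $l$, and one sets $\tilde\bb^{\sharp}=\lambda^{l}\bb^{\sharp}-d^{\sharp}\beta\in\cF^{-n+1}$. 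Iterating raises $\bb^{\sharp}$ into $\cF^{0}$, one level at a time. The treatment of $\bc^{\sharp}$ is then analogous (and here your outline is closer to correct, since for $\bc^{\sharp}$ scaling by $\lambda^{l}$ does keep the exponent aligned). The point is that the lemma really rests on a fact about Khovanov homology in negative gradings, and your argument never invokes it.
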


\begin{proof}
    We deal with $\bb^{\sharp}$ first. Suppose that $\bb^{\sharp}$
    does not lie in $\cF^{0}$ but lies in $\cF^{-n}$ for some $n>0$,
    with a non-zero leading term $b_{-n}$:
    \[
              \bb^{\sharp} = b_{-n} + z.
    \]
    Writing $d^{\sharp} = d + x$ as before, the degree-1 term in the
    equality $d^{\sharp} \bb^{\sharp} = 0$ tells us that $d
    b_{-n}=0$. Since the torsion-free part of $H(\bC(D), d)$ is
    supported in $h$-grading $0$, the cycle $\lambda^{l}b_{-n}$ must
    be $d$-exact once $l$ is large enough:
     \[
         \lambda^{l} b_{-n} = d\beta
     \]
     for some $\beta$ in $h$-grading $-n-1$.  If we define
      \[
            \tilde\bb^{\sharp} = \lambda^{l} \bb^{\sharp} - d^{\sharp}\beta,
      \]
      then $\tilde\bb^{\sharp}$ belongs to $\cF^{-n+1}$ and is
      $d^{\sharp}$-homologous to $\ba^{\sharp}_{k+l}$. We replace $k$
      by $k+l$, and replace $\bb^{\sharp}$ with $\tilde\bb^{\sharp}$.
      The effect is to decrease $n$ by at least $1$. In this way
      we eventually arrive at a situation with $\bb^{\sharp} \in
      \cF^{0}$.

       Once we have $\bb^{\sharp}$ in $\cF^{0}$, a similar argument
       applies to $\bc^{\sharp}$. Suppose that \eqref{eq:ak-repeat}
       holds, with $\bb^{\sharp}$ in $\cF^{0}$ and $\bc^{\sharp}$ 
       in $\cF^{-n}$ with a non-zero leading term $c_{-n}$ in
       $h$-grading $-n$, for some $n>1$. In \eqref{eq:ak-repeat}, the
       only term in degree $-n+1$ is $d c_{-n}$, so $d c_{-n}$ is
       zero. Using again the fact that the $d$-homology in negative
       $h$-gradings is torsion, we conclude that $\lambda^{l} c_{-n}$
       is $d$-exact, for some $l$:
         \[
                            \lambda^{l} c_{-n} = d \gamma.
          \]
       If we set \[ \tilde\bc^{\sharp} = \lambda^{l} \bc^{\sharp} -
       d^{\sharp} \gamma, \]
       then we have $d^{\sharp} \tilde \bc^{\sharp} = \lambda^{l}
       d^{\sharp}\bc^{\sharp}$, so
        \[
             \ba_{k+l}^{\sharp} =  \lambda^{l+k+m} \bb^{\sharp} +
      d^{\sharp} \tilde \bc^{\sharp} .
        \]
        On the other hand, $\tilde\bc^{\sharp}$ lies in $\cF^{-n+1}$,
        so we are better off by at least $1$. Eventually, we arrive in $\cF^{-1}$.        
\end{proof}

Given $\ba^{\sharp}_{k}$, $\bb^{\sharp}$ and $\bc^{\sharp}$ as in the lemma, let us write
$\ba_{k}$, $\bb$ and $\bc$ for the components of these in $h$-gradings
$0$, $0$ and $-1$ respectively. We have $\ba_{k} =
\Psi(\Sigma)(\lambda^{k}\bv_{+})$ as just noted,
and the leading term of the identity in the lemma is
\[
          \ba_{k} = \lambda^{k+m} \bb + d \bc.
\]
So in the homology group $\kh(K) = H(\bC(D), d)$, the element 
$\psi(\Sigma)(\lambda^{k}\bv_{+})$ is represented at the chain level
by a chain that is divisible by $\lambda^{k+m}$, where $m =
m^{\sharp}_{+}$. The largest possible power of $\lambda$ by which a
chain-representative of this class can be divisible is $m_{+}$,
by definition, so $m_{+} \ge m^{\sharp}_{+}$. This completes the proof
that $\ssharp(K) \ge s(K)$.

\section{Negative-definite manifolds and connected sums}

We give the proof of Corollary~\ref{cor:neg-def}.

\subsection{Three lemmas}

 As mentioned earlier and proved in
\cite{KM-unknot} and \cite{KM-singular}, the invariant $\Isharp(K)$ is
functorial not just for cobordisms in $[0,1]\times S^{3}$, but more
generally for oriented cobordisms of pairs $(W,\Sigma)$, where $W$ is
a 4-dimensional cobordism from $S^{3}$ to $S^{3}$ and $\Sigma$ is an
embedded 2-dimensional cobordism. We can also extend the definition to
immersed cobordisms $\Sigma \looparrowright W$ with normal crossings,
just as we did before, by blowing up and taking proper transforms.

Let $K$ be a knot in $S^{3}$, and let $(W,\Sigma)$ be such an oriented
cobordism of pairs (with $\Sigma$ embedded, not immersed), 
from $(S^{3}, U)$ to $(S^{3},\Sigma)$. Using the
induced map $\psi(W, \Sigma)$ to $\Isharp(K;\Gamma)$, we define
elements
\[
        \sigma_{+}(W,\Sigma) , \sigma_{-}(W,\Sigma) \in \Sring,
\]
well-defined up to multiplication by units, just as we did at
\eqref{eq:sigma-pm}. Passing to the completion $\Ring$, we obtain
non-negative numbers $\msharp_{-}(W,\Sigma)$ and
$\msharp_{+}(W,\Sigma)$. We take these to be $+\infty$ is
$\sigma_{\pm}(W,\Sigma)$ is zero. We then define
\[
      \msharp(W,\Sigma) = \frac{1}{2}(\msharp_{+}(W,\Sigma) +
      \msharp_{-}(W,\Sigma))
\]
and
\[
          \ssharp(W,\Sigma) = 2(g(\Sigma) - \msharp(W,\Sigma)),
\]
as we did in the case that $W$ was a cylinder. The proofs from the
previous case carry over, to establish the following to lemmas.

\begin{lemma}
    If $\Sigma$ and $\Sigma'$ are two embedded cobordisms in the same
    $W$ and if $\Sigma$ is homotopic to $\Sigma'$ relative to their
    common boundary, then $\ssharp(W,\Sigma) = \ssharp(W,\Sigma')$.
     \qed
\end{lemma}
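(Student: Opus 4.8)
The plan is to reduce the statement to a fact about immersed cobordisms and the twist/finger moves recorded in Proposition~5.2 of \cite{KM-singular} (quoted above). Suppose $\Sigma$ and $\Sigma'$ are embedded in $W$, agree on $\partial W$, and are homotopic rel boundary. A generic homotopy between them can be realized, up to isotopy, by a finite sequence of moves, each of which is either an ambient isotopy, a finger move (introducing a cancelling pair of double points), or the inverse of a finger move, possibly together with a positive or negative twist move followed later by its inverse. Concretely, I would argue: since $\Sigma$ and $\Sigma'$ are homotopic, after a finger move applied to $\Sigma$ and (the inverse of) a finger move applied to $\Sigma'$ we obtain two \emph{immersed} cobordisms $\widetilde\Sigma$ and $\widetilde\Sigma'$ that are regularly homotopic, and regular homotopy of immersed surfaces is generated by finger moves and their inverses (no twist moves are needed once we are allowed to track double points). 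So it is enough to know that $\psi(W,\widetilde\Sigma)$ and $\psi(W,\widetilde\Sigma')$ determine the same pair $\sigma_\pm$ up to units.

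The key computational input is that each of the three moves changes $\psi^\sharp$ only by multiplication by $(1-u^2)$ or by $1$ (items (1)--(3) of the quoted Proposition). Passing to the completion $\Ring$, multiplication by $(1-u^2)$ is multiplication by a unit times $\lambda$ (since $\lambda^2 = u^{-1}(u-1)^2$, one has $1-u^2 = -u^{-1}(u-1)(u+1) \sim \lambda \cdot(\text{unit})$, because $u-1\sim\lambda$ and $u+1$ is a unit). Therefore a finger move changes each of $\sigma_+(W,\Sigma)$ and $\sigma_-(W,\Sigma)$ by a factor that is a unit times $\lambda$, hence changes each of $\msharp_+$ and $\msharp_-$ by exactly $+1$, hence changes $\msharp(W,\Sigma)$ by $+1$. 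But a finger move also raises the genus of the (blown-up, resolved) cobordism by $1$ — this is exactly the mechanism by which $\msharp(\Sigma_1) = \msharp(\Sigma)+1$ was balanced against $g(\Sigma_1) = g(\Sigma)+1$ in the definition of $\ssharp$ in the cylinder case. So $g(\Sigma) - \msharp(W,\Sigma)$ is unchanged by a finger move; a negative twist move changes neither the genus contribution nor $\msharp$; and a positive twist move changes $g$ by $1$ and $\msharp$ by $1$ simultaneously. An ambient isotopy changes nothing. Hence $\ssharp(W,\Sigma) = 2(g(\Sigma)-\msharp(W,\Sigma))$ is invariant under every move in the sequence, giving $\ssharp(W,\Sigma) = \ssharp(W,\Sigma')$.

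The step I expect to be the main obstacle is the first one: making precise the claim that homotopic embedded cobordisms, rel boundary, are connected by a sequence of (inverse) finger moves and twist moves in the immersed category — i.e. that the homotopy can be put in the normal form to which Proposition~5.2 applies. This is a statement about generic singularities of homotopies of surfaces in $4$-manifolds (the analog of Reidemeister's theorem / the statement that regularly homotopic immersed curves differ by finger moves), and one must check that the blow-up construction used to define $\psi^\sharp$ for immersed cobordisms is compatible with this normal form. Once that is in hand, everything else is bookkeeping with the identity $1-u^2 \sim \lambda \cdot (\text{unit})$ and the additivity of $\msharp$ and $g$ under the moves, exactly as in the proof (given above) that $\ssharp(K)$ is independent of the choice of $\Sigma$. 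I would note that this lemma, and the argument for it, is the one asserted in the excerpt to "carry over" from the cylinder case, so in the paper itself I would simply remark that the proof is identical to the earlier one, citing Proposition~5.2 and the computation $1-u^2\sim\lambda$.
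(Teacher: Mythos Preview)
Your overall strategy---reduce to a sequence of local moves and apply Proposition~5.2 of \cite{KM-singular}---is exactly what the paper means by ``the proofs from the previous case carry over.'' However, there is a genuine error in your bookkeeping. You claim that a finger move ``raises the genus of the (blown-up, resolved) cobordism by $1$'' and that a positive twist move ``changes $g$ by $1$.'' Neither is true. A finger move (respectively a twist move) introduces a pair of double points (respectively a single double point) into the \emph{immersed} surface, but the underlying abstract surface---and hence its genus---is unchanged. When one blows up at a transverse double point, the proper transform simply separates the two local sheets; it does not add a handle. So your per-move invariance claim for $g-\msharp$ is false: a single finger move leaves $g$ fixed and increases $\msharp$ by $1$.

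The correct argument is simpler. Since $\Sigma$ and $\Sigma'$ are homotopic rel boundary, they have the same genus, so it suffices to show $\msharp(W,\Sigma)=\msharp(W,\Sigma')$. Put the homotopy in general position: it consists of isotopies together with finger moves, twist moves, and their inverses. Since both endpoints are embedded, the total number of double points returns to zero, and tracking signed and unsigned double-point counts shows that the net number of moves which multiply $\psi^{\sharp}$ by $(1-u^{2})$ (finger moves and positive twists) equals the net number of their inverses. Hence the factors of $(1-u^{2})$ cancel, $\sigma_{\pm}(W,\Sigma)$ and $\sigma_{\pm}(W,\Sigma')$ are associates in $\Sring$, and $\msharp$ is unchanged. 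Note this is \emph{not} the same mechanism as the genus-independence of $\ssharp(K)$, where one genuinely trades a handle addition (which does raise $g$) against the relation $p(u)q(u)=4\lambda^{2}$.
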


\begin{lemma}
    If $\Sigma_{1}$ is obtained from $\Sigma$ by forming the connected
    sum with an embedded torus inside a standard $4$-ball in $W$, then
    $\ssharp(W,\Sigma) = \ssharp(W,\Sigma_{1})$. \qed
\end{lemma}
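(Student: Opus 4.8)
The plan is to reduce the claim to the computation already done in the cylindrical case. Since $\ssharp(W,\Sigma)=2\bigl(g(\Sigma)-\msharp(W,\Sigma)\bigr)$ and $g(\Sigma_{1})=g(\Sigma)+1$, it is enough to show that $\msharp(W,\Sigma_{1})=\msharp(W,\Sigma)+1$.

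First I would realize the connected sum with the torus as a composition of cobordisms of pairs. Let $B\subset W$ be the standard $4$-ball, so that $B\cap\Sigma$ is a standardly embedded disk and $\Sigma_{1}$ is obtained from $\Sigma$ by replacing this disk with the genus-$1$ surface got by connect-summing in $T^{2}$ inside $B$, which still has the unknot as its boundary in $\partial B=S^{3}$. Because $\Sigma$ is connected there is an embedded arc in $\Sigma$ from $B\cap\Sigma$ to the incoming boundary $U\subset S^{3}\times\{0\}$; thickening this arc produces an ambient isotopy of $W$, supported near $\Sigma$ and fixing $\partial W$, that slides $B$ into a collar $[0,1]\times S^{3}$ of the incoming end. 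After this isotopy, $(W,\Sigma_{1})$ is presented as the composite $(W,\Sigma)\circ\bigl([0,1]\times S^{3},\,C_{1}\bigr)$, where $C_{1}$ is the standard genus-$1$ cobordism from $U$ to $U$, namely $\copants\circ\pants$. The one point here that is not purely formal is the identification, rel boundary, of $(B,\,B\cap\Sigma_{1})$ with this standard genus-$1$ pair; this is where the hypothesis that the torus sits inside a \emph{standard} ball is used, and it is the analogue of the corresponding step of the argument in the cylindrical case.

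With this in hand, functoriality of $\psi^{\sharp}$ under composition gives $\psi^{\sharp}(W,\Sigma_{1})=\psi^{\sharp}(W,\Sigma)\circ\psi^{\sharp}(C_{1})$ on $I'(U)$, and $\psi^{\sharp}(C_{1})$ is exactly the map $\phi$ of \eqref{eq:phi}. In the notation of \eqref{eq:p-and-q} this says
\[
   \sigma_{+}(W,\Sigma_{1})=q(u)\,\sigma_{-}(W,\Sigma),\qquad
   \sigma_{-}(W,\Sigma_{1})=p(u)\,\sigma_{+}(W,\Sigma)
\]
up to units. If $\sigma_{+}(W,\Sigma)$ or $\sigma_{-}(W,\Sigma)$ vanishes then so does the corresponding element for $\Sigma_{1}$, since $p(u)q(u)=4u^{-1}(u-1)^{2}\neq 0$ by \eqref{eq:pq}, and both sides of the desired identity are $+\infty$. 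Otherwise I pass to the completion $\Ring=\Q[[\lambda]]$, in which \eqref{eq:pq} reads $p(u)q(u)=4\lambda^{2}$; as $\Ring$ is a domain the $\lambda$-order is additive, so $\mathrm{ord}_{\lambda}p+\mathrm{ord}_{\lambda}q=2$ and
\[
   \msharp_{+}(W,\Sigma_{1})+\msharp_{-}(W,\Sigma_{1})
      =\msharp_{+}(W,\Sigma)+\msharp_{-}(W,\Sigma)+2 .
\]
Dividing by $2$ gives $\msharp(W,\Sigma_{1})=\msharp(W,\Sigma)+1$, which finishes the proof.

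I expect the main obstacle to be precisely the geometric reduction of the second paragraph: verifying that connected sum with an embedded torus in a standard ball is, rel its boundary, the standard genus-$1$ cobordism $\copants\circ\pants$, so that the calculation \eqref{eq:phi}--\eqref{eq:pq} applies verbatim, and that it can be slid into a collar of the incoming end in order to compose cleanly with $(W,\Sigma)$. Once that identification is in place, the remainder is the same formal bookkeeping with $p(u)$, $q(u)$ and $\lambda$-orders that was carried out in the cylindrical case.
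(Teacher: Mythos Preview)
Your proposal is correct and is exactly what the paper intends. The paper gives no proof beyond the remark that ``the proofs from the previous case carry over,'' and your argument is precisely that carry-over: factor $(W,\Sigma_{1})$ as $(W,\Sigma)$ composed with the standard genus-$1$ cobordism $C_{1}$ from $U$ to $U$, then use the relation $p(u)q(u)=4\lambda^{2}$ from \eqref{eq:pq} to obtain $\msharp(W,\Sigma_{1})=\msharp(W,\Sigma)+1$, which cancels against the change in genus. Your geometric step of sliding the ball into a collar of the incoming end is the natural way to make the factorization precise, and it is implicit in the paper's treatment of the cylindrical case via \eqref{eq:p-and-q}.
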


The third lemma that we shall need is a connected sum theorem for $W$:

\begin{lemma}
    Suppose that $W'$ is obtained from $W$ as a connected sum $W'=W\#
    N$, with the sum being made in a ball in $W$ disjoint from
    $\Sigma$. Suppose that $N$ has $b_{1}(N)=0$ and
    $b_{2}^{+}(N)=0$. Then $\ssharp(W',\Sigma)  = \ssharp(W,\Sigma)$.
\end{lemma}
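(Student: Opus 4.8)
The plan is to reduce the connected-sum statement to a statement about the instanton moduli spaces over $N$, exactly in the spirit of the gluing/connected-sum theorems that underlie instanton Floer homology. Since the connected sum is performed in a ball disjoint from $\Sigma$, the surface $\Sigma$ and all the Chern–Simons critical points that enter the definition of $\psi^{\sharp}(W,\Sigma)$ are unaffected; what changes is that every ASD connection on the cobordism $W$ now has a connected-sum neck along which it may carry nontrivial energy into $N$. So the first step is to set up the neck-stretching picture: put a long cylindrical neck $S^{3}\times[-T,T]$ at the connected-sum site and analyze which limiting configurations on $N\cup(S^{3}\times\R)$ can occur as $T\to\infty$.

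The key input is that $N$ has $b_{1}(N)=0$ and $b_{2}^{+}(N)=0$, i.e.\ $N$ (with a cylindrical end) is negative-definite, and that the $\SO(3)$-bundle in question restricts trivially to the ball where the sum is formed (the arc $w$ representing $w_{2}$ stays in $W$, away from $N$). The standard Donaldson-theoretic fact is then that the only finite-energy ASD connection on $N$ with limit the trivial flat connection on $S^{3}$ is the trivial connection itself, with no bubbling: any nontrivial contribution would force a reducible carrying a line bundle of negative square, hence (after the usual dimension count) would live in a moduli space of negative or obstructed dimension, and the lowest-energy instanton on $\R^{4}$ has energy too large to fit, given $b_{2}^{+}(N)=0$. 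Concretely I would quote the no-local-contribution principle exactly as it is used in the excision and connected-sum arguments of \cite{KM-unknot} and \cite{KM-singular}. The second step, therefore, is: stretching the neck, the matrix coefficients of $\psi^{\sharp}(W',\Sigma)$ in the generators $\bv_{\pm}$, $\zp$, $\zm$ converge to those of $\psi^{\sharp}(W,\Sigma)$, with the ``gluing parameter'' contributing only the trivial connection over $N$. Hence $\sigma_{\pm}(W',\Sigma)=\sigma_{\pm}(W,\Sigma)$ up to units, which immediately gives $\msharp_{\pm}(W',\Sigma)=\msharp_{\pm}(W,\Sigma)$ and so $\ssharp(W',\Sigma)=\ssharp(W,\Sigma)$.

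For the write-up I would phrase this as: the cobordism map is unchanged because ``$N$ contributes a factor of $1$,'' referencing the connected-sum theorem already established for $\Isharp$ in \cite{KM-unknot} (which handles exactly $W\#N$ with $N$ negative-definite and $b_{1}=0$), and then noting that since the map $\psi^{\sharp}$ is literally unchanged, so are the derived quantities $\sigma_{\pm}$, $\msharp_{\pm}$ and $\ssharp$. This keeps the proof to a few lines and avoids re-deriving the gluing analysis.

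The main obstacle, and the only place where care is genuinely needed, is the vanishing-of-local-contributions step: one must be sure that \emph{no} nontrivial ASD solution or bubble over $N$ survives, including in the possibly lower-dimensional or obstructed strata, and one must confirm that the orbifold/$w_{2}$ data plays no role (it doesn't, since $w$ avoids $N$ and the bundle over the connect-sum ball is trivial, so this is a classical — not singular — instanton computation over $N$). Provided that is cited correctly from the existing connected-sum theorem for $\Isharp$, the rest is formal.
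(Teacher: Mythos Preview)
Your neck-stretching strategy is exactly the one the paper uses, but there is a genuine gap in the step you flag as ``the main obstacle.'' You assert that the only finite-energy ASD connection on $N$ with trivial limit on $S^{3}$ is the trivial connection, so that $N$ contributes a factor of $1$. This is false in general: the lemma does not assume $N$ is simply connected, only that $b_{1}(N)=0$, so $H_{1}(N;\Z)$ can be any finite abelian group. Every flat $\SU(2)$ connection on $N$ has zero energy and restricts trivially to the $S^{3}$ end, and there is one such connection for each homomorphism $\pi_{1}(N)\to\{\pm 1\}$, together with further $S^{1}$-reducible flat connections coming from the torsion in $H_{1}$. Your dimension-count argument rules out positive-energy instantons and bubbles, but it says nothing against these flat reducibles.

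The paper deals with this by invoking the perturbation and orientation analysis of \cite{Donaldson-orientations}: after suitable holonomy perturbations the perturbed $M_{0}(N)$ consists of $a$ fully reducible points (one per map $\pi_{1}(N)\to\{\pm1\}$) and finitely many $S^{1}$-reducibles, each counted with weight $2$, and the gluing gives
\[
\psi^{\sharp}(W',\Sigma) \;=\; (a+2b)\,\psi^{\sharp}(W,\Sigma)
\]
with $a+2b=|H_{1}(N;\Z)|$. This scalar is a nonzero rational, hence a unit in $\Sring=\Q[u^{-1},u]$, and that is what yields $\sigma_{\pm}(W',\Sigma)\sim\sigma_{\pm}(W,\Sigma)$ and hence the equality of $\ssharp$. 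So your conclusion ``equal up to units'' is ultimately correct, but the unit is $|H_{1}(N;\Z)|$, not $1$, and you need the reducible-counting and orientation input from \cite{Donaldson-orientations} to justify it; the connected-sum statement you want to cite from \cite{KM-unknot} is not stated there in the form you need.
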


\begin{proof}
    The essential parts of this connect-sum theorem are proved in
    \cite{Donaldson-orientations}. We are not assuming that $N$ is
    simply connected, so the instanton moduli space $M_{0}(N)$ for
    instanton number $0$ (i.e. the space of flat $\SU(2)$ connections)
    is potentially non-trivial. After choosing suitable holonomy
    perturbations on $N$ as in \cite{Donaldson-orientations}, we may
    arrange that the perturbed moduli space consists of the following:
    \begin{enumerate}
    \item one reducible flat connection for each homomorphism
        $\pi_{1}(N) \to \{\pm 1 \} \subset \SU(2)$;
    \item one $S^{1}$-reducible connection for each of the finitely many $S^{1}$-reducible
        flat connections on $N$.
    \end{enumerate}
    Furthermore, for the perturbed-flat connections $A$ of the second
    kind, we can be assured that the kernel of the linearized
    perturbed equations with gauge-fixing is zero.

    We now follow the standard argument. We consider a $1$-parameter
    family of metrics on $W'$ in which the neck of the connected sum
    is stretched. The usual dimension-counting shows that the only
    contributions come from flat (or perturbed-flat) connections on
    $N$. Following the analysis of the orientations from
    \cite{Donaldson-orientations}, we see that
      \[
               \psi(W',\Sigma) = (a + 2b) \psi(W,\Sigma)
       \]
      where $a$ is the number of points in $M_{0}(N)$ of the first
      kind and $b$ is the number of points of the second kind. The
      quantity $a+2b$ is the order of $H_{1}(N;\Z)$. Since in any
      event this number is non-zero, the result follows.
\end{proof}

\subsection{Conclusion of the proof}

As a corollary of these three lemmas, we deduce:

\begin{corollary}
    If $W$ has $b_{1}(W)=0$ and $b_{2}^{+}(W)=0$, and if the inclusion
    $\Sigma\hookrightarrow W$ is homotopic to  map whose image is
    contained in the union of $\partial W= S^{3}\cup S^{3}$ together
    with an arc joining them, then $\ssharp(W,\Sigma)$ coincides with
    the standard invariant $\ssharp(K)$.
\end{corollary}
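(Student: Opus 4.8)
The plan is to reduce the statement to the three lemmas by a standard surgery-and-homotopy argument. First I would use the homotopy hypothesis to replace $\Sigma$ with a more convenient representative. By hypothesis the inclusion $\Sigma \hookrightarrow W$ is homotopic rel $\partial\Sigma$ to a map into $\partial W \cup \gamma$, where $\gamma$ is an arc joining the two boundary components. A generic perturbation of such a homotopy is a smooth immersion, and the image of a generic immersed cobordism following that arc is (up to isotopy) a standard product cobordism $[0,1]\times K$ pushed into a collar of the arc, together with finitely many double-points coming from the failure of the homotopy to be an embedding. Using the immersed-cobordism functoriality and the twist/finger-move relations (Proposition~5.2 of \cite{KM-singular}, quoted above), each double point contributes only a factor that is a unit times a power of $(1-u^{2})$, i.e. a power of $\lambda$, so these do not affect $g(\Sigma)$ and shift both $\msharp_{+}$ and $\msharp_{-}$ by the same amount; hence they cancel in $\ssharp(W,\Sigma)=2(g(\Sigma)-\msharp(W,\Sigma))$. (Alternatively, the first of the three lemmas lets us replace $\Sigma$ by a homotopic \emph{embedded} cobordism outright, if one arranges the homotopy to stay embedded, which is possible in dimension $4$ after a small perturbation since $\Sigma$ is $2$-dimensional in a $4$-manifold; then there is nothing to say here beyond invoking that lemma.)

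Next I would handle the topology of $W$ itself. Since $b_{1}(W)=0$ and $b_{2}^{+}(W)=0$, the manifold $W$ is a cobordism from $S^{3}$ to $S^{3}$ whose middle homology is negative-definite (or zero). After attaching the arc $\gamma$ and a small tube, we may assume the cobordism $\Sigma$ misses a ball $B$ in $W$; removing $B$ and capping appropriately, I would like to exhibit $W$ as $W_{0}\# N$ where $W_{0}$ is the product cobordism $[0,1]\times S^{3}$ and $N$ is a closed negative-definite $4$-manifold with $b_{1}(N)=0$ — but in general the decomposition is not literally a connected sum of a product with a closed manifold. The correct move is instead: because $\Sigma$ lies in a collar of $\partial W\cup\gamma$, we can choose the connected-sum ball of the third lemma to absorb \emph{all} of the interesting topology of $W$, writing $W = ([0,1]\times S^{3})\,\#\,N$ for a closed $4$-manifold $N$ with $b_{1}(N)=b_{2}^{+}(N)=0$, with the sum taken in a ball disjoint from $\Sigma$. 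Concretely: cut $W$ along a separating $S^{3}$ that bounds, on one side, a punctured copy of $[0,1]\times S^{3}$ containing (a collar neighbourhood of) $\Sigma$, and on the other side a punctured closed manifold $N$; that $N$ inherits $b_{1}=0$ and the negative-definiteness gives $b_{2}^{+}(N)=0$.

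With that decomposition in hand, the third lemma (the connected-sum theorem for $W$) applies directly and gives $\ssharp(W,\Sigma) = \ssharp([0,1]\times S^{3},\Sigma)$. By the first lemma, $\ssharp([0,1]\times S^{3},\Sigma)$ depends only on the homotopy class of $\Sigma$ rel $\partial$, and by the second lemma it is unchanged by connect-summing with tori inside a ball; since any cobordism from $U$ to $K$ in $[0,1]\times S^{3}$ is determined up to such stabilisations and homotopy, $\ssharp([0,1]\times S^{3},\Sigma) = \ssharp(K)$, the standard invariant. Chaining these equalities gives $\ssharp(W,\Sigma) = \ssharp(K)$.

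The main obstacle I anticipate is the second paragraph: justifying that $W$ may genuinely be written as $([0,1]\times S^{3})\#N$ with the summing ball disjoint from $\Sigma$. One must check that the separating $S^{3}$ can be chosen to bound a punctured product containing $\Sigma$ — this uses that $\Sigma$ is homotopic into (hence, after perturbation, isotopic into a neighbourhood of) $\partial W\cup\gamma$, so that a thin product region suffices — and that what is left is a closed manifold with $b_{1}=0$ and $b_{2}^{+}=0$, which is where the hypotheses on $W$ are consumed. The homotopy-to-immersion step and the double-point bookkeeping are routine given Proposition~5.2, and the final identification with $\ssharp(K)$ is exactly the definition unwound through the first two lemmas.
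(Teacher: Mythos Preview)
Your overall architecture matches the paper's: push $\Sigma$ into a neighbourhood of $\partial W\cup\gamma$, recognise $W$ as $([0,1]\times S^{3})\#N$, and apply the third lemma. But the first step, as you have written it, has a real gap.

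The ``alternatively'' clause is simply false: a generic map of a surface into a $4$-manifold is an immersion with isolated transverse double points, not an embedding. So you cannot invoke the first lemma directly without further work.

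Your main route, tracking double points via Proposition~5.2, does not work as stated either. You write that the double-point factors ``shift both $\msharp_{+}$ and $\msharp_{-}$ by the same amount; hence they cancel in $\ssharp(W,\Sigma)$.'' They do not cancel: a positive twist multiplies $\psi^{\sharp}$ by $(1-u^{2})\sim\lambda^{2}$, which increases $\msharp$ by $2$ while leaving $g(\Sigma)$ fixed, so $\ssharp=2(g-\msharp)$ drops by $4$. In particular, $\ssharp$ extended to immersed surfaces via blow-up is \emph{not} a homotopy invariant, and the first lemma is stated only for embedded $\Sigma$, $\Sigma'$ for exactly this reason. Concretely, an embedded annulus from $U$ to $U$ and the same annulus with one positive twist are homotopic rel boundary in $[0,1]\times S^{3}$, yet their $\ssharp$ values differ.

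The paper repairs this by using the second lemma \emph{first} rather than last. Add enough handles to $\Sigma$ (which leaves $\ssharp$ unchanged by the second lemma) so that the immersed representative in the neighbourhood of $\partial W\cup\gamma$ can have all its double points tubed off; this produces an \emph{embedded} $\Sigma'$ in that neighbourhood, homotopic to the stabilised $\Sigma$, and now the first lemma applies. After that the connected-sum step and the identification with $\ssharp(K)$ go exactly as you describe.
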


\begin{proof}
    After adding additional handles to $\Sigma$ (and applying the
    second of the three lemmas above), we may assume that $\Sigma$ is
    homotopic to an embedded surface $\Sigma'$ entirely contained in a
    neighborhood of $S^{3} \cup S^{3}$ and the arc joining them. We
    may replace $\Sigma$ with $\Sigma'$, because of the first
    lemma. At this point, we can regard $W$ as a connected sum of a
    standard cylinder $[0,1]\times S^{3}$ with a negative definite
    manifold $N$. Our surface is disjoint from $N$, so the third lemma
    applies. That is, $\ssharp(W,\Sigma)$ is equal to
    $\ssharp([0,1]\times S^{3}, \Sigma)$. The latter is, by
    definition, the invariant $\ssharp(K)$.
\end{proof}

\begin{corollary}
    Let $X$ be an oriented $4$-manifold with boundary $S^{3}$ that has $b_{1}(X)=b_{2}^{+}(X)=0$, and let
    $\Sigma$ be a properly embedded, connected oriented surface with
    boundary a knot $K$ in $S^{3}$. Suppose that the inclusion of
    $\Sigma$ in $X$ is homotopic to the boundary $S^{3}$ relative to
    $K$. Then $2 g(\Sigma) \ge \ssharp(K)$.
\end{corollary}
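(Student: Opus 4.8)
The plan is to derive this statement as an essentially immediate consequence of the previous corollary together with the definition of $\ssharp$ as a lower bound for $2g_*$. First I would close up $X$ to a cobordism $W$ from $S^3$ to $S^3$ by attaching a collar, i.e.\ set $W = X \cup_{S^3} ([0,1]\times S^3)$, so that $W$ is an oriented cobordism from $(S^3, U)$ to $(S^3, K)$ once we adjoin to $\Sigma$ a trivial cylinder $[0,1]\times K$ in the collar and cap off the $U$-end by a standard disk. Since $b_1(X) = b_2^+(X) = 0$ and attaching a collar changes nothing, we have $b_1(W) = b_2^+(W) = 0$. The homotopy hypothesis on $\Sigma \hookrightarrow X$ rel $K$ transports directly to a homotopy of the enlarged surface inside $W$ into the union of $\partial W = S^3 \cup S^3$ with an arc joining them: this is exactly the hypothesis of the preceding corollary.

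Next I would invoke the preceding corollary to conclude that $\ssharp(W, \Sigma) = \ssharp(K)$, where here $\Sigma$ denotes the completed cobordism surface in $W$ whose genus equals $g(\Sigma)$ (capping with a disk does not change the genus). Then, unwinding the definitions from Section~3: for the cobordism $(W,\Sigma)$ from $(S^3,U)$ to $(S^3,K)$ we have $\ssharp(W,\Sigma) = 2(g(\Sigma) - \msharp(W,\Sigma))$, and since $\msharp(W,\Sigma) = \tfrac12(\msharp_+ + \msharp_-) \ge 0$ (the numbers $\msharp_\pm$ are non-negative, being exponents of $\lambda$, or $+\infty$), we get $\ssharp(W,\Sigma) \le 2 g(\Sigma)$. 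Combining the two displays yields $2 g(\Sigma) \ge \ssharp(W,\Sigma) = \ssharp(K)$, which is the claim.

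The steps are all routine; the only point requiring a little care — and the one I would flag as the main thing to check — is that the topological hypothesis transports correctly. Specifically, I must make sure that after attaching the collar and capping the $U$-end, the surface really is homotopic rel its boundary $K$ into $\partial W \cup (\text{arc})$: the given homotopy takes $\Sigma$ into the $S^3 = \partial X$ boundary component rel $K$, and the capping disk together with the collar cylinder is already contained in a neighborhood of the other boundary $S^3$ and the joining arc, so the homotopies patch. One should also confirm that the orientation conventions match so that the induced map is $\psi(W,\Sigma): \Isharp(U;\Gamma) \to \Isharp(K;\Gamma)$ in the sense used to define $\sigma_\pm(W,\Sigma)$; this is immediate from the setup in Section~5.1. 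No genuine obstacle arises: the corollary is a formal packaging of the three lemmas and the $\ssharp \le 2g_*$ inequality.
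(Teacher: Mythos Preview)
Your overall strategy --- apply the previous corollary to a suitable cobordism $(W,\Sigma')$, then use $\msharp(W,\Sigma')\ge 0$ to get $\ssharp(W,\Sigma')\le 2g(\Sigma)$ --- is exactly the paper's, and the two deductive steps at the end are fine. The construction of $W$, however, is not right. The manifold $X$ has a \emph{single} boundary component $S^{3}$; gluing on a collar $[0,1]\times S^{3}$ produces something diffeomorphic to $X$, still with one boundary component, not a cobordism from $S^{3}$ to $S^{3}$. There is no ``$U$-end'' to cap off in your picture.

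The fix is the one the paper uses: remove a standard ball-pair from $(X,\Sigma)$, i.e.\ delete a small $4$-ball meeting $\Sigma$ in a standard disk at an interior point. This creates a second $S^{3}$ boundary in which the new boundary circle of $\Sigma$ is an unknot $U$, and the resulting $(W,\Sigma')$ is a genuine cobordism of pairs from $(S^{3},U)$ to $(S^{3},K)$ with $g(\Sigma')=g(\Sigma)$ and $b_{1}(W)=b_{2}^{+}(W)=0$. The homotopy hypothesis transports as you describe, the previous corollary gives $\ssharp(W,\Sigma')=\ssharp(K)$, and then your inequality $\ssharp(W,\Sigma')\le 2g(\Sigma')$ finishes the proof.
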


\begin{proof}
    We may regard $(X,\Sigma)$ as providing a cobordism $(W,\Sigma')$
    by removing a standard ball-pair. From the definition of
    $\ssharp(W,\Sigma')$, we have $\ssharp(W,\Sigma') \le 2
    g(\Sigma)$. But $\ssharp(W,\Sigma') = \ssharp(K)$ by the previous corollary.
\end{proof}

Corollary~\ref{cor:neg-def} in the introduction is simply a
restatement of the last corollary above, in light of the equality $\ssharp=s$.

\bibliographystyle{abbrv}
\bibliography{s-invariant}

\end{document}